\definecolor{blue}{rgb}{0.25,0.55,1}
\definecolor{green}{rgb}{0,0.75,0}
\DeclareMathOperator{\Aut}{Aut}
\theoremstyle{plain} 
\newtheorem{theorem}{Theorem}[section] 
\newtheorem{lemma}[theorem]{Lemma} 
\theoremstyle{definition} 
\newtheorem{definition}[theorem]{Definition}
\newtheorem{example}[theorem]{Example}
\newtheorem{construction}{Construction}
\theoremstyle{remark} 
\newtheorem{remark}[theorem]{Remark}
\title{Constructing Cospectral Vertices Through Orbits of Subgraphs}
\author{Onur Ege Erden\thanks{Current address: University of Waterloo, Department of Combinatorics, 200 University Avenue West, Waterloo, Canada. \texttt{oeerden@uwaterloo.ca}} \hspace{1cm} 
Fatihcan M. Atay\thanks{\texttt{f.atay@bilkent.edu.tr}}  \\[4pt]
\emph{Department of Mathematics, Bilkent University, 06800 Ankara, Turkey} 
}
\date{}
\begin{document}

\maketitle
\begin{abstract}

A constructive method is given for obtaining cospectral vertices in undirected graphs, along with an operation that preserves this construction. We prove that the construction yields cospectral vertices,  as well as strongly cospectral vertices under additional conditions.  
Furthermore, we generalize cospectral vertices to the case of the graph Laplacian 
and provide an analogous construction.

\end{abstract}

\section{Introduction}

Let $G(V,E)$ be a finite graph with vertex set $V$ and edge set $E$. 
We denote by $G \setminus v_i$ the subgraph obtained by deleting a vertex $v_i$ from $G$.
Two distinct vertices $v_i, v_j \in V$ are said to be \emph{cospectral} in $G$ if the spectra of the adjacency matrices of the graphs $G \setminus v_i$ and $G \setminus v_j$ are identical.

Cospectral vertices were introduced by Schwenk \cite{schwenk_cospectral_trees} for investigating whether the spectrum of the adjacency matrix determines the graph up to isomorphism. Using the non-symmetric cospectral vertex pair shown in Figure \ref{fig:cospectral_example}, he showed that the proportion of trees on $n$ vertices determined by the spectrum of their adjacency matrix goes to zero as $n \xrightarrow{} \infty$. 

\begin{figure}[h]
    \centering
    \begin{tikzpicture}[scale = 1.5]

\draw[thick]  (-6, -1.7) --  (-5, -1.7);
\draw[thick]  (-4, -1.7) --  (-5, -1.7);
\draw[thick]  (-4, -1.7) --  (-3, -1.7);
\draw[thick]  (-2, -1.7) --  (-3, -1.7);
\draw[thick]  (-2, -1.7) --  (-1, -1.7);
\draw[thick]  (0, -1.7) --  (-1, -1.7);
\draw[thick]  (0, -1.7) --  (1, -1.7);
\draw[thick]  (-1, -1.7) --  (-1, -0.7);

\fill (-6, -1.7) circle (2pt) node[left] {};
\fill (-5, -1.7) circle (2pt) node[left] {};
\fill (-4, -1.7) circle (2pt) node[left] {};
\fill[blue] (-3, -1.7) circle (2pt) node[above] {};
\fill (-2, -1.7) circle (2pt) node[right] {};
\fill (-1, -1.7) circle (2pt) node[right] {};
\fill[blue] (0, -1.7) circle (2pt) node[right] {};
\fill (1, -1.7) circle (2pt) node[above] {};

\fill (-1, -0.7) circle (2pt) node[right] {};

\node at (-3, -1.4) {$v_1$};
\node at (0, -1.4) {$v_2$};

    \end{tikzpicture}
\caption{A graph containing non-symmetric cospectral vertices $v_1$ and $v_2$ colored in blue. }\label{fig:cospectral_example}
    
\end{figure}
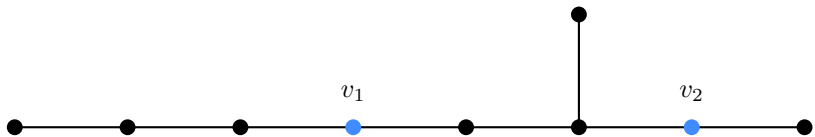

Recently, cospectral vertices have been shown to be related to phenomena in different fields beyond pure graph theory. One such example is strongly cospectral vertices, which is a stronger version of cospectrality that arises in the study of continuous quantum walks on graphs, 
where it has been observed that for perfect state transfer between two vertices to occur, these vertices have to be strongly cospectral \cite{Strongly_Cospectral,spectrally_extremal_strong_cospectrality}. 
Another example is latently symmetric vertices, introduced in \cite{Hidden_symmetry} as a generalized vertex symmetry in networks. 
Latently symmetric vertices were defined as vertices that are symmetric under a standard graph symmetry in an isospectral graph reduction of the network, which is a method that reduces the number of vertices of a graph without altering the spectrum of its adjacency matrix \cite{bunimovich_isospectral_transformations}. Having a natural sense of scale, latent symmetries were shown to be relevant for analyzing network hierarchy and decomposition of networks \cite{Latent_sym_networks}. It has been shown that cospectral vertex pairs are also latently symmetric \cite{Isospectral_red_cospectral}. Moreover, for undirected graphs the converse is also shown to be true, so cospectral vertices are identical to latently symmetric vertices \cite{Isospectral_red_cospectral}. 
Cospectral vertices have also found applications in physics where they have been used to construct lattices lacking any apparent symmetries \cite{flat_bands_latent_symmetry,topological_states_hidden_symmetry}.

Despite various studies on cospectral vertices and related concepts,
there haven't been many theoretical results that explain the existing examples of known cospectral vertices or general methods that enable the construction of new ones. 
One notable method to obtain cospectral vertices from existing ones is given in \cite{cospectrality_preserving_modifications}, where a cospectrality-preserving vertex addition/deletion operation is studied. Although not intended as a method for obtaining new cospectral vertices, a direct construction of cospectral (in fact, strongly cospectral) vertices was recently given in \cite{cayley_strongly_cospectral}  in the special case of Cayley graphs, where this construction was used to show the existence of strong cospectrality classes of arbitrarily large size in Cayley graphs. This result itself follows a similar result obtained in \cite{cayley2_strongly_cospectral}, in which the authors construct infinite families of Cayley graphs that contain a set of four pairwise strongly cospectral vertices.
Another special case focusing on strongly cospectral vertices is provided in \cite{twin_vertices_strongly_cospectral}, where the author gives conditions for preservation of strong cospectrality under Cartesian and direct product of graphs as well as join of graphs when one of the graphs is either empty or complete.

In this paper, we give a general constructive method to obtain cospectral vertices. The method, not requiring any particular graph structure, can be used to construct arbitrarily large and complex graphs with cospectral vertex pairs. We also provide an operation modifying this construction that preserves the cospectral vertices. We use this operation and the construction to explain existing examples of cospectral vertices in the literature, as well as to provide some of our own as examples. Moreover, we prove conditions that characterize when the cospectral vertices so obtained are also strongly cospectral. 
Lastly, we generalize the definition of cospectral vertices to the Laplacian matrix using a dynamical perspective. 
We note that a straightforward copy of the original definition 
does not yield identical Laplacian spectra; hence, we adopt an equivalent definition that allows the appropriate generalization.
Our definition enables us to give an analogous construction for Laplacian cospectral vertices.

\section{Notation and background}

All graphs are assumed to be nonempty, undirected, connected, and without loops. 
The notation $G(V,E)$ (often written simply as $G$) denotes a graph with finite vertex set $V$ and edge set $E$. We sometimes use the more informative forms 
$V(G)$ and $E(G)$ when it is necessary to refer to a particular graph. 
The letter $v$ in all its forms, such as $v'$, $v_i$ or $v_i^1$, always denotes a vertex of a graph. An edge between two vertices $v_i$ and $v_j$, i.e. $(v_i,v_j) \in E$, is indicated by $v_i \sim v_j$. 
    The group of graph automorphisms of $G$ that fix a vertex \( v_c \in V(G)\) is denoted by \(\mathrm{Aut}(G, v_c)\), and the orbits of vertices under this action is denoted by \( [v_j]_{v_c} \subseteq V(G)\), that is,  \( [v_j]_{v_c} \) is the set of vertices of \( G \) that are in the same orbit as \( v_j \) under the action of \(\mathrm{Aut}(G, v_c)\). 

Given a graph $G(V,E)$, we consider the vector space $\mathbb{R}^n$, where $n = |V|$. 
We usually identify $V$ with the set $\{1,2,\dots,n\}$, so a vertex $v \in V$ represents an index, and we denote the corresponding standard basis vector in $\mathbb{R}^n$ with boldface; thus, each vertex $v$ is associated with the $v$-th standard basis vector $\boldsymbol{v} \in \mathbb{R}^n$.
We denote the $v$-th component of an arbitrary vector $\boldsymbol{x} \in \mathbb{R}^n$ by $(\boldsymbol{x})_{v}$. Similarly, for a matrix $A$, we will denote the $(v_i,v_j)$-th entry as $(A)_{v_i, v_j}$ or $A_{v_i, v_j}$. 
Given a map $A: \mathbb{R}^n \xrightarrow{} \mathbb{R}^n$ and a vector $\boldsymbol{x} \in \mathbb{R}^n$, we define the subspace generated by $\boldsymbol{x}$ under the action of $A$ by $\langle \boldsymbol{x} \rangle_A = \text{span}\{\boldsymbol{x},A\boldsymbol{x},...,A^{n-1}\boldsymbol{x}\} = \text{span}\{\boldsymbol{x},A\boldsymbol{x},...,A^{n-1}\boldsymbol{x},\dots\}$. We denote the subspace orthogonal to $\langle \boldsymbol{x} \rangle_A$ by $\langle \boldsymbol{x} \rangle_A^{\perp}$.

The capital letters $A$ and $L$ denote the adjacency matrix and the Laplacian matrix of a graph, respectively, where $L=D-A$, with $D$ denoting the diagonal matrix of vertex degrees. We may also write $A(G)$ and $L(G)$ when the graph needs to be specified. When there are multiple graphs labeled in a certain way such as $\widetilde{G}$, $G'$ or $G_i$, their corresponding adjacency and Laplacian matrices will be decorated accordingly as $\widetilde{A}$ , $A'$, $A_i$, and so on.

We will denote by $\phi(G,t)$ the characteristic polynomial $ \det(tI - A)$ of the adjacency matrix $A$ of $G$ and by $\Lambda_A$ the set of eigenvalues of $A$. The following is the definition of cospectral vertices given in \cite{schwenk_cospectral_trees}.

\begin{definition}[\textit{Cospectral vertices}] \label{def:cospectral}
    Two distinct vertices $v_i, v_j \in V$ of the graph $G$ are said to be \textit{cospectral} in $G$ if the characteristic polynomial of the adjacency matrices of the graphs $G \setminus v_i$ and $G \setminus v_j$ are equal; that is, if $ \phi(G \setminus v_i, t) = \phi(G \setminus v_j,t)$.
\end{definition}

For an undirected graph $G(V,E)$, the adjacency matrix $A$ is symmetric and has the spectral decomposition $A = \sum_{\lambda \in \Lambda_A} \lambda E_{\lambda}$, where $E_{\lambda}$ are the orthogonal projection maps onto the eigenspaces of $A$. 
The following theorem gives equivalent characterizations of cospectral vertices.

\begin{theorem}\label{thm: Characterization_cospectral}\cite{Strongly_Cospectral}
    Let $G$ be a graph and let $v_i, v_j \in V(G)$ be two distinct vertices. Then, the following are equivalent:
    \begin{enumerate}
        \item $v_i$ and $v_j$ are cospectral, i.e. $\phi(G \setminus v_i,t) = \phi(G \setminus v_j,t)$.
        \item $(A^k)_{(v_i, v_i)} = (A^k)_{(v_j, v_j)}$ for all $k \in \mathbb{N}$.
        \item $(E_{\lambda})_{(v_i, v_i)} = (E_{\lambda})_{(v_j, v_j)}$ for all $\lambda \in \Lambda_A$.
        \item The subspaces $\langle \boldsymbol{v}_{i} + \boldsymbol{v}_j \rangle_{A}$ and $\langle \boldsymbol{v}_{i} - \boldsymbol{v}_j \rangle_A$ are orthogonal.
    \end{enumerate}
\end{theorem}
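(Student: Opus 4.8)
\section*{Proof proposal}

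The plan is to prove the equivalences as a cycle
$(1) \Leftrightarrow (2) \Leftrightarrow (3) \Leftrightarrow (4)$, using throughout the spectral decomposition $A = \sum_{\lambda \in \Lambda_A} \lambda E_{\lambda}$ together with the fact that each $E_{\lambda}$ is a symmetric idempotent. The common thread linking the walk counts of $(2)$, the projections of $(3)$, and the cyclic subspaces of $(4)$ is that an eigenvalue-distinctness (Vandermonde) argument lets one pass freely between the ``power basis'' $\{A^k \boldsymbol{x}\}$ and the ``spectral basis'' $\{E_{\lambda}\boldsymbol{x}\}$.

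For $(1) \Leftrightarrow (2)$, I would invoke the classical cofactor identity expressing the diagonal of the resolvent as a ratio of characteristic polynomials, namely $\bigl((tI - A)^{-1}\bigr)_{v_i, v_i} = \phi(G \setminus v_i, t)/\phi(G,t)$, which holds because deleting row and column $v_i$ from $tI - A$ produces $tI - A(G \setminus v_i)$. Expanding the resolvent as a Neumann series gives $\bigl((tI-A)^{-1}\bigr)_{v_i, v_i} = \sum_{k \geq 0} (A^k)_{v_i, v_i}\, t^{-(k+1)}$, so this ratio is the generating function of the closed-walk counts at $v_i$. Since $\phi(G,t)$ is shared by both vertices, equality of $\phi(G\setminus v_i, t)$ and $\phi(G\setminus v_j, t)$ is equivalent to term-by-term equality of these generating functions, i.e. to $(2)$.

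For $(2) \Leftrightarrow (3)$, I would write $(A^k)_{v_i, v_i} = \sum_{\lambda} \lambda^k (E_{\lambda})_{v_i, v_i}$. The direction $(3) \Rightarrow (2)$ is then immediate by summation; for the converse, subtracting the two vertex expressions yields $\sum_{\lambda} \lambda^k \bigl[(E_{\lambda})_{v_i, v_i} - (E_{\lambda})_{v_j, v_j}\bigr] = 0$ for every $k$, and since the eigenvalues are distinct the resulting Vandermonde system forces each bracketed difference to vanish.

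The step I expect to require the most care is $(3) \Leftrightarrow (4)$. Here I would first record the structural fact that $\langle \boldsymbol{x} \rangle_A = \bigoplus_{\lambda:\, E_{\lambda}\boldsymbol{x} \neq 0} \mathrm{span}\{E_{\lambda}\boldsymbol{x}\}$, which again follows from Vandermonde invertibility applied to $A^k \boldsymbol{x} = \sum_{\lambda} \lambda^k E_{\lambda}\boldsymbol{x}$. Because projections onto distinct eigenspaces have orthogonal ranges, the orthogonality of $\langle \boldsymbol{v}_i + \boldsymbol{v}_j \rangle_A$ and $\langle \boldsymbol{v}_i - \boldsymbol{v}_j \rangle_A$ collapses to the single-eigenvalue conditions $\langle E_{\lambda}(\boldsymbol{v}_i + \boldsymbol{v}_j),\, E_{\lambda}(\boldsymbol{v}_i - \boldsymbol{v}_j)\rangle = 0$ for all $\lambda$. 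Using the symmetry of $E_{\lambda}$ to cancel the two cross terms and the identity $(E_{\lambda})_{v, v} = \langle E_{\lambda}\boldsymbol{v}, \boldsymbol{v}\rangle = \|E_{\lambda}\boldsymbol{v}\|^2$, this inner product evaluates to $\|E_{\lambda}\boldsymbol{v}_i\|^2 - \|E_{\lambda}\boldsymbol{v}_j\|^2 = (E_{\lambda})_{v_i, v_i} - (E_{\lambda})_{v_j, v_j}$, so the two cyclic subspaces are orthogonal exactly when $(3)$ holds. The main obstacle is getting this reduction to same-eigenvalue inner products correct; once the automatic orthogonality across distinct eigenspaces is invoked, the remainder is a short computation that closes the cycle.
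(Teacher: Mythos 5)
The paper does not prove this theorem itself --- it is quoted from the cited reference \cite{Strongly_Cospectral} --- so there is no in-paper proof to compare against. Your argument is correct and is essentially the standard one from that source: the resolvent/cofactor identity $\bigl((tI-A)^{-1}\bigr)_{v_i,v_i}=\phi(G\setminus v_i,t)/\phi(G,t)$ for $(1)\Leftrightarrow(2)$, Vandermonde invertibility for $(2)\Leftrightarrow(3)$, and the decomposition $\langle \boldsymbol{x}\rangle_A=\bigoplus_{\lambda}\mathrm{span}\{E_\lambda \boldsymbol{x}\}$ together with $\|E_\lambda\boldsymbol{v}\|^2=(E_\lambda)_{v,v}$ for $(3)\Leftrightarrow(4)$ are all sound and complete the cycle.
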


The third condition of Theorem \ref{thm: Characterization_cospectral} is equivalent to the projected vectors $E_{\lambda}\boldsymbol{v}_{c}$ and $E_{\lambda}\boldsymbol{v}_j$ having equal magnitude. Strong cospectrality is a stronger version of this condition:

\begin{definition}[\emph{Strongly cospectral vertices} \cite{Strongly_Cospectral}] 
Let $G(V,E)$ be a graph and let $A = \sum_{\lambda \in \Lambda_A}\lambda E_{\lambda}$ be its adjacency matrix where $E_{\lambda}$ are the orthogonal projection maps onto the eigenspaces of $A$. Two vertices $v_i,v_j \in V(G)$  are called \textit{strongly cospectral} if $E_{\lambda} \boldsymbol{v}_{i} = \pm E_{\lambda} \boldsymbol{v}_j$, $\forall\lambda \in \Lambda_A$. 
\end{definition}

We will use one of the equivalent definitions of cospectrality to generalize it to the Laplacian matrix. In order to distinguish the different notions of cospectrality, we will use the terms $A$-cospectral and $L$-cospectral, with $A$-cospectral being as in the original 
Definition \ref{def:cospectral}.
In the constructions, we indicate the relevant cospectrality relation in parentheses.

\begin{definition}[\textit{\(A\)-cospectral} and \textit{\(L\)-cospectral vertices}]

    Let \( G (V,E)\) be a graph and  let \( v_i, v_j \in V \) be two vertices of \( G \). Let $A$ and $L$ denote the adjacency matrix and the Laplacian matrix of \( G \), respectively. We say that \( v_i \) and \( v_j \) are \textit{\( A \)-cospectral}  in \( G \) if the subspaces $\langle \boldsymbol{v}_{i} + \boldsymbol{v}_j \rangle_A$ and $\langle \boldsymbol{v}_{i} - \boldsymbol{v}_j \rangle_A$  generated under the action of $A$ are orthogonal, that is, $\langle \boldsymbol{v}_{i} + \boldsymbol{v}_j \rangle_A \perp \langle \boldsymbol{v}_{i} - \boldsymbol{v}_j \rangle_A$.
  Similarly, we say that \( v_i \) and \( v_j \) are \textit{\( L \)-cospectral} in \( G \) if the subspaces \mbox{$\langle \boldsymbol{v}_{i} + \boldsymbol{v}_j \rangle_L$} and $\langle \boldsymbol{v}_{i} - \boldsymbol{v}_j \rangle_L$ generated under the action of $L$ are orthogonal, that is, $\langle \boldsymbol{v}_{i} + \boldsymbol{v}_j \rangle_L \perp \langle \boldsymbol{v}_{i} - \boldsymbol{v}_j \rangle_L$.

\end{definition}

\begin{remark}\label{rem:cospectral_definition}

We should note that there is a concept of Laplace cospectral or $L$-cospectral graphs in the literature (e.g., \cite{L_cospectral_literature, L_cospectral_literature2}), defined by having equal Laplacian spectrum. Our definition of Laplace cospectral vertices is not a direct generalization of the original cospectral vertices in this sense; since with our definition of $L$-cospectral vertices, the graphs that result after removing one of the vertices do not necessarily have equal Laplacian spectrums. We are using a different definition based on the subspaces $\langle \boldsymbol{v}_{i} + \boldsymbol{v}_j \rangle_L$ and $\langle \boldsymbol{v}_{i} - \boldsymbol{v}_j \rangle_L$ generated under the action of $L$. Although the two definitions coincide for the adjacency matrix, they are different for the Laplacian matrix. 
\end{remark}

\section{Constructing cospectral vertices}

We give an algorithmic procedure for constructing a pair of cospectral vertices using two arbitrary graphs. 

\begin{construction}[$A$-cospectral]\label{cons:A_cospectral_constructive}  
    Let \( G \) be a graph whose vertices are labeled as $v_1,v_2,\dots,v_n \in V(G)$, where $n = |V(G)|$. 
    Fix any vertex in \( V(G) \) and denote it $v_c$; we will refer to it as the \textit{fixed vertex} of the construction. 
    Let \( H \) be another graph with vertices labeled as $v'_1, v'_2, \dots, v'_r \in V(H)$, where $r = |V(H)|$. Construct the graph \( \widetilde{G} \) as follows: Take  \( H \) and two copies of \( G \), denoted as \( G^1(V^1, E^1) \) and \( G^2(V^2,E^2)\). 
    Denote the vertices of \( G^1 \) and \( G^2 \) by \( v_{1}^1, v_{2}^1, \dots, v_n^1 \) and \( v_{1}^2, v_{2}^2, \dots, v_n^2 \), respectively. Now add arbitrarily many edges between \(G^1\) and \( H \) subject to the following condition:     
For every added edge between a vertex \( v_{j}^1 \in V(G^1) \) and a vertex \mbox{\( v'_\alpha \in V(H) \)},  an edge should be added between \( v'_\alpha \) and a vertex \( v_{m}^2 \in V(G^2)\) such that \(
    v_{m} \in [v_j]_{v_c}
\); that is, \( v_j \) and \( v_{m} \) are in the same orbit in \( G \) under the action of \( \mathrm{Aut}(G, v_c) \). 
In other words, in the final graph $\widetilde{G}$ all vertices $v'_\alpha \in V(H)$ should have the same number of neighbours in corresponding orbits of the subgraphs $G^1$ and $G^2$. 
A schematic representation is shown in Figure \ref{fig:A_cospectral_constructive}. 
\end{construction}

We prove that the fixed vertex $v_c$ and its copy are a co-spectral pair.

    \begin{figure}[h]
        \centering
        \begin{tikzpicture}
            \draw (0,0) circle (2cm);
            \node at (0, 2.3) {\( G^1 \)};
            
            \begin{scope}
                \clip (0,0) circle (2cm);
                \fill[gray!20] (-2, 0.7) rectangle (2, 2);
                \fill[gray!40] (-2, -0.5) rectangle (2, 0.7);
                \fill[gray!60] (-2, -1.7) rectangle (2, -0.5);
                \fill[gray!100] (-2, -2) rectangle (2, -1.7);
            \end{scope}
            \draw (-1.9, 0.7) -- (1.9, 0.7);
            \draw (-1.95, -0.5) -- (1.95, -0.5);
            \draw (-1.05, -1.7) -- (1.05, -1.7);
            
            \draw (6,0) circle (2cm);
            \node at (6, 2.3) {\( G^2 \)};
            
            \begin{scope}
                \clip (6,0) circle (2cm);
                \fill[gray!20] (4, 0.7) rectangle (8, 2);
                \fill[gray!40] (4, -0.5) rectangle (8, 0.7);
                \fill[gray!60] (4, -1.7) rectangle (8, -0.5);
                \fill[gray!100] (4, -2) rectangle (8, -1.7);
            \end{scope}
            \draw (4.1, 0.7) -- (7.9, 0.7);
            \draw (4.05, -0.5) -- (7.95, -0.5);
            \draw (5.05, -1.7) -- (7.05, -1.7);

            \draw (3,3) circle (1.3cm);
            \node at (3, 4.6) {\( H \)};
            
            \fill (0.5, 1.2) circle (2pt) node[above] {\( v_{2}^1 \)};

            \fill (0, 0) circle (2pt) node[above] {\( v_{1}^1 \)};
            
            \fill (0, -2) circle (2pt) node[below] {\( v_{c}^1 \)};
            
            \fill (6.5, 1.2) circle (2pt) node[above] {\( v_{2}^2 \)};
            \fill (5.2, 1.2) circle (2pt) node[above] {\( v_{3}^2 \)};

            \fill (4.7, 0.3) circle (2pt) node[below] {\( v_{4}^2 \)};

            \fill (6, -2) circle (2pt) node[below] {\( v_{c}^2 \)};

            \fill (3, 2) circle (2pt) node[above] {\( v'_{1} \)};
            \fill (3.5, 2.9) circle (2pt) node[above] {\( v'_{2} \)};
            \fill (2.3, 2.5) circle (2pt) node[above] {\( v'_{3} \)};
            
            \draw (3, 2) -- (4.7, 0.3);
            \draw (3, 2) -- (0, 0);

            \draw (3.5, 2.9) -- (0.5, 1.2);
            \draw (3.5, 2.9) -- (6.5, 1.2);

            \draw (2.3, 2.5) -- (0.5, 1.2);
            \draw (2.3, 2.5) -- (5.2, 1.2);
        \end{tikzpicture}
        \caption{Construction \ref{cons:A_cospectral_constructive} illustrated. The graphs \( G^1 \) and \( G^2 \) are divided into orbits under the action of \(\mathrm{Aut}(G, v_c)\), shown by shades of gray, where the darkest gray contains only \(v_{c}\), the fixed vertex of the construction. The vertices \(v_{1}\), \(v_{4}\) and \(v_{2}\), \(v_{3}\) are in the same orbit. For every edge from a vertex in \(G^1\) to a vertex \(v'_\alpha \in V(H)\), there is a corresponding edge to \(v'_\alpha \in V(H)\) from a vertex of \(G^2\) in the same orbit. Theorem \ref{thm:A_cospectral_constructive} proves that the vertices \( v_{c}^1 \) and \( v_{c}^2 \) are $A$-cospectral in the union graph \( \widetilde{G} \).}\label{fig:A_cospectral_constructive}
      
    \end{figure}
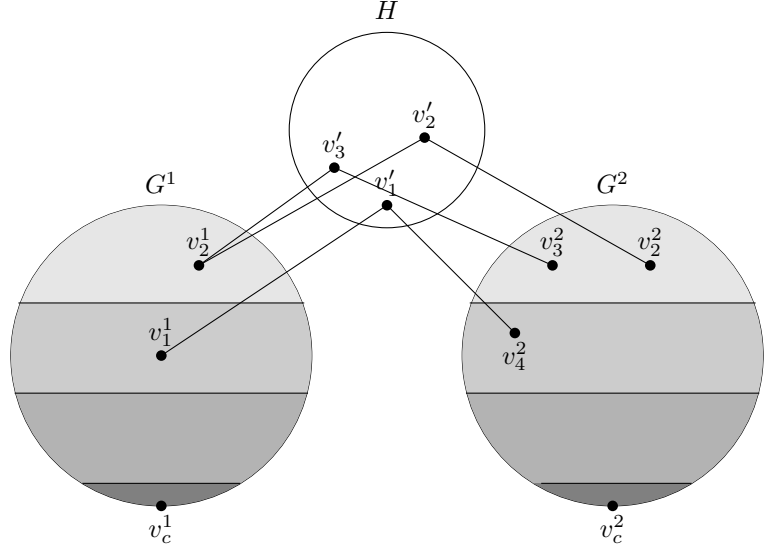

\begin{theorem}\label{thm:A_cospectral_constructive}  
Let $G$ and $H$ be two graphs and let $\widetilde{G}$ be a graph constructed as in Construction \ref{cons:A_cospectral_constructive}, with the fixed vertex $v_c \in V(G)$ and subgraphs $G^1$, $G^2$, $H$. Then, \( v_{c}^1 \) and \( v_{c}^2 \) are $A$-cospectral in \( \widetilde{G} \).     
\end{theorem}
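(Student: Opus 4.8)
The plan is to verify the second characterization in Theorem~\ref{thm: Characterization_cospectral}, namely that $(\widetilde{A}^{\,k})_{v_c^1 v_c^1} = (\widetilde{A}^{\,k})_{v_c^2 v_c^2}$ for every $k$; by that theorem this is equivalent to the $A$-cospectrality asserted here (condition~4). Equivalently, I will show that the two diagonal entries of the walk generating function $W(t) = (I - t\widetilde{A})^{-1} = \sum_{k\ge 0} t^k \widetilde{A}^{\,k}$ at $v_c^1$ and $v_c^2$ coincide as formal power series in $t$.

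First I would record Construction~\ref{cons:A_cospectral_constructive} algebraically. Ordering the vertices of $\widetilde{G}$ as $V(G^1), V(G^2), V(H)$ gives
\[
\widetilde{A}=\begin{pmatrix} A & 0 & B_1\\ 0 & A & B_2\\ B_1^{\top} & B_2^{\top} & A_H\end{pmatrix},
\]
where $B_i$ is the $G^i$–$H$ incidence block and there are no edges between the copies. Let $\Phi$ be the orthogonal projection of $\mathbb{R}^{V(G)}$ onto the $\Aut(G,v_c)$-invariant vectors (orbit averaging). The defining requirement that every $v'_\alpha \in V(H)$ have the same number of neighbours in corresponding orbits of $G^1$ and $G^2$ is exactly the identity $\Phi B_1 = \Phi B_2$, i.e. $\Phi\Delta = 0$ for $\Delta := B_1 - B_2$. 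Moreover $v_c$ is fixed by the whole group, so $\Phi\boldsymbol{v}_c = \boldsymbol{v}_c$, and $\Phi$ commutes with $A$. These three facts are the only places the hypothesis enters. I would then eliminate $H$ by a Schur complement: writing $\mathcal{A} = \mathrm{diag}(A,A)$, $\mathcal{B} = \left(\begin{smallmatrix}B_1\\B_2\end{smallmatrix}\right)$ and $S = (I - tA_H)^{-1}$, the block of $W(t)$ on the two copies equals $(I - t\mathcal{M})^{-1}$ with $\mathcal{M} = \mathcal{A} + t\,\mathcal{B} S\mathcal{B}^{\top}$. Let $U$ be the copy-swap involution, so $U\boldsymbol{v}_c^{1} = \boldsymbol{v}_c^{2}$, and set $\mathcal{M}' := U\mathcal{M} U$. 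Since $(I - t\mathcal{M}')^{-1} = U(I - t\mathcal{M})^{-1}U$, the two diagonal entries we must compare are $\boldsymbol{v}_c^{1\top}(I-t\mathcal{M})^{-1}\boldsymbol{v}_c^{1}$ and $\boldsymbol{v}_c^{1\top}(I-t\mathcal{M}')^{-1}\boldsymbol{v}_c^{1}$, so the goal reduces to the scalar identity $\boldsymbol{v}_c^{1\top}\big[(I - t\mathcal{M})^{-1} - (I - t\mathcal{M}')^{-1}\big]\boldsymbol{v}_c^{1} = 0$.

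The main work, and the step I expect to be the real obstacle, is this last identity. I would apply the resolvent identity to write the difference as $t\,(I-t\mathcal{M})^{-1}(\mathcal{M} - \mathcal{M}')(I - t\mathcal{M}')^{-1}$, and split $\mathcal{B}$ into its $U$-symmetric part $\mathcal{S}$ and $U$-antisymmetric part $\mathcal{C} = \left(\begin{smallmatrix}\Delta\\-\Delta\end{smallmatrix}\right)$. Then $\mathcal{M} - \mathcal{M}'$ equals, up to the scalar $\tfrac{t}{2}$, the sum of the two cross terms $\mathcal{S} S\mathcal{C}^{\top}$ and $\mathcal{C} S\mathcal{S}^{\top}$, so with $\boldsymbol{\psi} = (I-t\mathcal{M})^{-1}\boldsymbol{v}_c^1$ and $\boldsymbol{\chi} = (I-t\mathcal{M}')^{-1}\boldsymbol{v}_c^1$ the target collapses to $(\mathcal{S}^{\top}\boldsymbol{\psi})^{\top} S\,(\mathcal{C}^{\top}\boldsymbol{\chi}) + (\mathcal{C}^{\top}\boldsymbol{\psi})^{\top} S\,(\mathcal{S}^{\top}\boldsymbol{\chi})$. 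Solving the two defining systems block-by-block and subtracting, the $\Aut(G,v_c)$-invariance kills the inhomogeneous contribution, because $\Delta^{\top}(I - tA)^{-1}\boldsymbol{v}_c = \Delta^{\top}\Phi(I - tA)^{-1}\boldsymbol{v}_c = 0$ (here I use $\Phi\boldsymbol{v}_c = \boldsymbol{v}_c$, $[\Phi,A]=0$, and $\Phi\Delta=0$). What survives is governed entirely by $K := \Delta^{\top}(I - tA)^{-1}\Delta$ and $S$, and the target becomes an expression of the form $\boldsymbol{\sigma}^{\top}\big[F^{\top} SKS - SKSF\big]\boldsymbol{\sigma}'$ with $F = (I - \tfrac{t^2}{2}KS)^{-1}$. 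Crucially, passing from $\mathcal{M}$ to $\mathcal{M}' = U\mathcal{M} U$ flips the sign of the $\Delta$-terms, so the two cross contributions enter with opposite signs and assemble into this commutator-type bracket.

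I would finish with the elementary operator identity $SKS\,(I - \tfrac{t^2}{2}KS) = (I - \tfrac{t^2}{2}SK)\,SKS$ (both sides equal $SKS - \tfrac{t^2}{2}SKSKS$, using $K^{\top}=K$ and $S^{\top}=S$), which rearranges to $F^{\top} SKS = SKSF$ and makes the bracket vanish identically. This gives condition~2 of Theorem~\ref{thm: Characterization_cospectral} and hence the $A$-cospectrality of $v_c^1$ and $v_c^2$. The conceptual crux is thus that the orbit hypothesis is used only to annihilate the source term via $\Phi\Delta=0$, after which the cancellation of the two excursion-contributions between the copies is a pure resolvent/commutation identity; I would expect the bookkeeping of the block system and keeping track of which terms are $\Aut(G,v_c)$-invariant to be the most error-prone part.
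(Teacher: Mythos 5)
Your proposal is correct in substance and takes a genuinely different route from the paper. The paper argues directly with the powers $\widetilde{A}^k(\boldsymbol{v}_{c}^1-\boldsymbol{v}_{c}^2)$: a single induction on $k$ shows these vectors vanish on $V(H)$, are antisymmetric under the copy swap, and are constant on $\Aut(G,v_c)$-orbits, after which condition 4 of Theorem \ref{thm: Characterization_cospectral} is immediate. You instead encode the construction algebraically --- the orbit condition is indeed exactly $\Phi B_1=\Phi B_2$ for the orbit-averaging projection $\Phi$ --- eliminate $H$ by a Schur complement, and chase resolvents. What your formulation buys is a clean isolation of where the hypothesis enters ($\Phi\Delta=0$, $\Phi\boldsymbol{v}_c=\boldsymbol{v}_c$, $[\Phi,A]=0$) and an expression for the whole walk generating function, not just the diagonal identity; the cost is the block bookkeeping, and the middle of your sketch (``solving the two defining systems block-by-block'') is the part that is not yet a proof. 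I would point out, though, that your endgame is heavier than it needs to be: the subspace $\mathcal{W}=\{(\boldsymbol{u},-\boldsymbol{u}) : \Phi\boldsymbol{u}=\boldsymbol{u}\}$ is $\mathcal{M}$-invariant with $\mathcal{M}|_{\mathcal{W}}=\mathcal{A}|_{\mathcal{W}}$, because $\mathcal{B}^{\top}(\boldsymbol{u},-\boldsymbol{u})=\Delta^{\top}\boldsymbol{u}=0$ and $A$ preserves the $\Phi$-fixed space; hence $(I-t\mathcal{M})^{-1}(\boldsymbol{v}_{c}^1-\boldsymbol{v}_{c}^2)$ lies in $\mathcal{W}$, is $U$-antisymmetric, and is therefore orthogonal to $\boldsymbol{v}_{c}^1+\boldsymbol{v}_{c}^2$ --- which is condition 4 directly, with no resolvent difference, no splitting of $\mathcal{B}$, and no $F^{\top}SKS-SKSF$ bracket. (This invariant subspace is precisely the generating-function form of the paper's three inductive claims.) If you do keep your route, the cross terms do cancel, but for a simpler reason than the commutator identity you propose: both $\mathcal{S}^{\top}$ and $\mathcal{C}^{\top}$ annihilate $(I-t\mathcal{M})^{-1}(\boldsymbol{v}_{c}^1-\boldsymbol{v}_{c}^2)$, so $\mathcal{S}^{\top}\boldsymbol{\psi}=\mathcal{S}^{\top}\boldsymbol{\chi}$ and $\mathcal{C}^{\top}\boldsymbol{\psi}=-\mathcal{C}^{\top}\boldsymbol{\chi}$, and the two summands in your collapsed target are negatives of each other by the symmetry of $S$.
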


    \begin{proof}
    
    Let $\widetilde{A}$ denote the adjacency matrix of $\widetilde{G}$. We claim that the following hold for all \( k \in \mathbb{N} \):

    \begin{enumerate}[label=(\roman*)]
        \item\label{thm1:claim1}  $(\widetilde{A}^k (\boldsymbol{v}_{c}^1 - \boldsymbol{v}_{c}^2))_{v'_\alpha} = 0$ for all $ v'_\alpha \in V(H)$ 
        \item\label{thm1:claim2}  $(\widetilde{A}^k (\boldsymbol{v}_{c}^1 - \boldsymbol{v}_{c}^2))_{v_{m}^1} = -(\widetilde{A}^k (\boldsymbol{v}_{c}^1 - \boldsymbol{v}_{c}^2))_{v_{m}^2}$ for all $ v_{m} \in V(G)$
        \item\label{thm1:claim3}  $(\widetilde{A}^k (\boldsymbol{v}_{c}^1 - \boldsymbol{v}_{c}^2))_{v_{m}^1} = (\widetilde{A}^k (\boldsymbol{v}_{c}^1 - \boldsymbol{v}_{c}^2))_{v_{\ell}^1}$ for any $v_{m}, v_\ell \in V(G)$ such that $v_\ell \in [v_m]_{v_c}$.
    \end{enumerate}
    Clearly, claims \ref{thm1:claim2} and \ref{thm1:claim3} together also imply       \[
    (\widetilde{A}^k (\boldsymbol{v}_{c}^1 - \boldsymbol{v}_{c}^2))_{v_{m}^2} = (\widetilde{A}^k (\boldsymbol{v}_{c}^1 - \boldsymbol{v}_{c}^2))_{v_{\ell}^2} \;\; \text{for any } v_{m}, v_\ell \in V(G) \text{ such that } v_\ell \in [v_m]_{v_c}.
    \]
We prove by induction on $k$.
   
    \textit{Base Case \( k = 0 \):} Trivial.  
    
    \textit{Inductive Step:} Suppose claims \ref{thm1:claim1}, \ref{thm1:claim2} and \ref{thm1:claim3} hold for $k - 1$. We will show they also hold for $k$. For any \( v'_\alpha \in V(H) \) we have  
\begin{multline}\label{eq:thm1:eq1}
        (\widetilde{A}^k (\boldsymbol{v}_{c}^1 - \boldsymbol{v}_{c}^2))_{v'_\alpha} =
    \sum_{\substack{v'_\beta \sim v'_\alpha \\ v'_\beta \in V(H)}} (\widetilde{A}^{k-1} (\boldsymbol{v}_{c}^1 - \boldsymbol{v}_{c}^2))_{v'_\beta} \\ 
    + \sum_{\substack{v_{m}^1 \sim v'_\alpha \\ v_m \in V(G)}} (\widetilde{A}^{k-1} (\boldsymbol{v}_{c}^1 - \boldsymbol{v}_{c}^2))_{v_{m}^1} + \sum_{\substack{v_{\ell}^2 \sim v'_\alpha \\ v_\ell \in V(G)}} (\widetilde{A}^{k-1} (\boldsymbol{v}_{c}^1 - \boldsymbol{v}_{c}^2))_{v_{\ell}^2}.
    \end{multline}
By the induction hypothesis for claim \ref{thm1:claim1}, we have $ (\widetilde{A}^{k-1} (\boldsymbol{v}_{c}^1 - \boldsymbol{v}_{c}^2))_{v'_\beta} = 0$ for all $v'_\beta \in V(H)$. So, $\sum_{\substack{v'_\beta \sim v'_\alpha \\ v'_\beta \in V(H)}} (\widetilde{A}^{k-1} (\boldsymbol{v}_{c}^1 - \boldsymbol{v}_{c}^2))_{v'_\beta} = 0$. 
By Construction \ref{cons:A_cospectral_constructive}, for every vertex $v_m \in V(G)$ such that $v_m^1 \sim v_\alpha'$, there exists a corresponding vertex $v_{\ell_m} \in [v_m]_{v_c}$ such that $v_{\ell_m}^2 \sim v_\alpha'$. 
Using these facts, the right-hand side of equation (\ref{eq:thm1:eq1}) becomes
 \begin{multline*}
    \sum_{\substack{v_{m}^1 \sim v'_\alpha \\ v_m \in V(G)}} (\widetilde{A}^{k-1} (\boldsymbol{v}_{c}^1 - \boldsymbol{v}_{c}^2))_{v_{m}^1} + \sum_{\substack{v_{\ell}^2 \sim v'_\alpha \\ v_\ell \in V(G)}} (\widetilde{A}^{k-1} (\boldsymbol{v}_{c}^1 - \boldsymbol{v}_{c}^2))_{v_{\ell}^2} 
    \\
    = \sum_{\substack{v_{m}^1 \sim v'_\alpha \\ v_m \in V(G)}} \big[(\widetilde{A}^{k-1} (\boldsymbol{v}_{c}^1 - \boldsymbol{v}_{c}^2))_{v_{m}^1} + (\widetilde{A}^{k-1} (\boldsymbol{v}_{c}^1 - \boldsymbol{v}_{c}^2))_{v_{\ell_m}^2}]  
 \end{multline*}
where \( v_{\ell_m} \in [v_m]_{v_c} \) by construction. Using the induction hypothesis for claims \ref{thm1:claim2} and \ref{thm1:claim3}, respectively, in the first and second equalities below, we obtain
   \begin{multline*}
    \sum_{\substack{v_{m}^1 \sim v'_\alpha \\ v_m \in V(G)}} \big[(\widetilde{A}^{k-1} (\boldsymbol{v}_{c}^1 - \boldsymbol{v}_{c}^2))_{v_{m}^1} + (\widetilde{A}^{k-1} (\boldsymbol{v}_{c}^1 - \boldsymbol{v}_{c}^2))_{v_{\ell_m}^2}]  
    \\
    = \sum_{\substack{v_{m}^1 \sim v'_\alpha \\ v_m \in V(G)}} \big[(\widetilde{A}^{k-1} (\boldsymbol{v}_{c}^1 - \boldsymbol{v}_{c}^2))_{v_{m}^1} - (\widetilde{A}^{k-1} (\boldsymbol{v}_{c}^1 - \boldsymbol{v}_{c}^2))_{v_{\ell_m}^1}] = 0.
     \end{multline*} 
Hence, the right-hand side of (\ref{eq:thm1:eq1}) is zero. This proves claim \ref{thm1:claim1} for  \( k \). To prove claim \ref{thm1:claim2}, note that for any \( v_{m}^1 \in V(G^1) \),  
    \[
    (\widetilde{A}^k (\boldsymbol{v}_{c}^1 - \boldsymbol{v}_{c}^2))_{v_{m}^1}
    = \sum_{\substack{v_{\ell} \sim v_{m} \\ v_{\ell} \in V(G)}} (\widetilde{A}^{k-1} (\boldsymbol{v}_{c}^1 - \boldsymbol{v}_{c}^2))_{v_{\ell}^1}
    + \sum_{\substack{v'_\alpha \sim v_{m}^1\\ v'_\alpha \in V(H)}} (\widetilde{A}^{k-1} (\boldsymbol{v}_{c}^1 - \boldsymbol{v}_{c}^2))_{v'_\alpha}.
    \]
By the induction hypothesis for claim \ref{thm1:claim1}, the second term is zero; therefore,
    \[
    (\widetilde{A}^k (\boldsymbol{v}_{c}^1 - \boldsymbol{v}_{c}^2))_{v_{m}^1}
    = \sum_{\substack{v_{\ell} \sim v_{m} \\ v_{\ell} \in V(G)}} (\widetilde{A}^{k-1} (\boldsymbol{v}_{c}^1 - \boldsymbol{v}_{c}^2))_{v_{\ell}^1}.
    \]
By the induction hypothesis for claims \ref{thm1:claim2} and then \ref{thm1:claim1}, we have
    \begin{align*}
    \sum_{v_\ell \sim v_{m}} (\widetilde{A}^{k-1} (\boldsymbol{v}_{c}^1 - \boldsymbol{v}_{c}^2))_{v_{\ell}^1} &= -\sum_{\substack{v_{\ell} \sim v_{m} \\ v_{\ell} \in V(G)}} (\widetilde{A}^{k-1} (\boldsymbol{v}_{c}^1 - \boldsymbol{v}_{c}^2))_{v_{\ell}^2} 
    \\
    &= -\sum_{\substack{v_{\ell} \sim v_{m} \\ v_{\ell} \in V(G)}} (\widetilde{A}^{k-1} (\boldsymbol{v}_{c}^1 - \boldsymbol{v}_{c}^2))_{v_{\ell}^2} - \sum_{\substack{v'_\alpha \sim v_{m}^2\\ v'_\alpha \in V(H)}} (\widetilde{A}^{k-1} (\boldsymbol{v}_{c}^1 - \boldsymbol{v}_{c}^2))_{v'_\alpha}
    \\
    &=  -(\widetilde{A}^k (\boldsymbol{v}_{c}^1 - \boldsymbol{v}_{c}^2))_{v_{m}^2},
    \end{align*}
which proves claim  \ref{thm1:claim2}.  Finally to prove claim  \ref{thm1:claim3}, let \( v_m, v_\ell \in V(G) \) be two vertices such that \( v_\ell \in [v_m]_{v_c}\). Then there must exist a graph automorphism $\gamma \in \mathrm{Aut}(G,v_c)$ such that $\gamma (v_\ell) = v_m$. Now, 
    \begin{align*}
    (\widetilde{A}^k (\boldsymbol{v}_{c}^1 - \boldsymbol{v}_{c}^2))_{v_{m}^1}
    &= \sum_{\substack{v_{j} \sim v_{m} \\ v_{j} \in V(G)}} (\widetilde{A}^{k-1} (\boldsymbol{v}_{c}^1 - \boldsymbol{v}_{c}^2))_{v_{j}^1}
    + \sum_{\substack{v'_\alpha \sim v_{m}^1\\ v'_\alpha \in V(H)}} (\widetilde{A}^{k-1} (\boldsymbol{v}_{c}^1 - \boldsymbol{v}_{c}^2))_{v'_\alpha}.
    \end{align*}
Since the last term is zero by the induction hypothesis for claim \ref{thm1:claim1}, 
     \begin{align}\label{eq:thm1:eq2}
      (\widetilde{A}^k (\boldsymbol{v}_{c}^1 - \boldsymbol{v}_{c}^2))_{v_{m}^1}
    &= \sum_{\substack{v_{j} \sim v_{m} \\ v_{j} \in V(G)}} (\widetilde{A}^{k-1} (\boldsymbol{v}_{c}^1 - \boldsymbol{v}_{c}^2))_{v_{j}^1}.
     \end{align}
By the fact that corresponding neighbors of \( v_{m} \) and \( v_{\ell} \) are in the same automorphism orbits of $G$ via \( \gamma \) and by the induction hypothesis for claim \ref{thm1:claim3}, we have 
\begin{align*}
\sum_{\substack{v_{j} \sim v_{m} \\ v_{j} \in V(G)}} (\widetilde{A}^{k-1} (\boldsymbol{v}_{c}^1 - \boldsymbol{v}_{c}^2))_{v_{j}^1} = \sum_{\substack{v_{s} \sim v_{\ell} \\ v_{s} \in V(G)}} (\widetilde{A}^{k-1} (\boldsymbol{v}_{c}^1 - \boldsymbol{v}_{c}^2))_{v_{s}^1}
\end{align*}
Since $(\widetilde{A}^k (\boldsymbol{v}_{c}^1 - \boldsymbol{v}_{c}^2))_{v_{m}^1}
    =  \sum_{\substack{v_{j} \sim v_{m} \\ v_{j} \in V(G)}} (\widetilde{A}^{k-1} (\boldsymbol{v}_{c}^1 - \boldsymbol{v}_{c}^2))_{v_{j}^1}$ for any vertex $v_m \in V(G)$, equation (\ref{eq:thm1:eq2}) now implies     
    \begin{align*}
    (\widetilde{A}^k (\boldsymbol{v}_{c}^1 - \boldsymbol{v}_{c}^2))_{v_{m}^1}
    &=  \sum_{\substack{v_{j} \sim v_{m} \\ v_{j} \in V(G)}} (\widetilde{A}^{k-1} (\boldsymbol{v}_{c}^1 - \boldsymbol{v}_{c}^2))_{v_{j}^1} \\
    &= \sum_{\substack{v_{s} \sim v_{\ell} \\ v_{s} \in V(G)}} (\widetilde{A}^{k-1} (\boldsymbol{v}_{c}^1 - \boldsymbol{v}_{c}^2))_{v_{s}^1}= (\widetilde{A}^k (\boldsymbol{v}_{c}^1 - \boldsymbol{v}_{c}^2))_{v_{\ell}^1}
    \end{align*}
proving claim \ref{thm1:claim3} and completing the induction argument. 

Now we have proved claim \ref{thm1:claim2}, which says that  
\[
    (\widetilde{A}^k (\boldsymbol{v}_{c}^1 - \boldsymbol{v}_{c}^2))_{v_{m}^1} = - (\widetilde{A}^k (\boldsymbol{v}_{c}^1 - \boldsymbol{v}_{c}^2))_{v_{m}^2}, \quad  \forall v_m \in V(G) \text{ and } k \in \mathbb{N}.
    \]
In particular, this implies that for all \( k \in \mathbb{N} \),  
    \[
    (\widetilde{A}^k (\boldsymbol{v}_{c}^1 - \boldsymbol{v}_{c}^2))_{v_{c}^1} + (\widetilde{A}^k (\boldsymbol{v}_{c}^1 - \boldsymbol{v}_{c}^2))_{v_{c}^2} = (\widetilde{A}^k (\boldsymbol{v}_{c}^1 - \boldsymbol{v}_{c}^2))^T (\boldsymbol{v}_{c}^1 + \boldsymbol{v}_{c}^2) = 0.
    \]Thus, we conclude that $\boldsymbol{v}_{c}^1 + \boldsymbol{v}_{c}^2 \perp \langle \boldsymbol{v}_{c}^1 - \boldsymbol{v}_{c}^2 \rangle_{\widetilde{A}}$, or (since $\widetilde{A}$ is symmetric) equivalently, 
\[
    \langle \boldsymbol{v}_{c}^1 + \boldsymbol{v}_{c}^2 \rangle_{\widetilde{A}} \perp \langle \boldsymbol{v}_{c}^1 - \boldsymbol{v}_{c}^2 \rangle_{\widetilde{A}}.
    \]
Hence, \( v_{c}^1 \) and \( v_{c}^2 \) are $A$-cospectral in \(\widetilde{G}\).  
\end{proof}

\begin{example}\label{ex:A_cospectral_constructive1}
We provide a simple example in Figure \ref{fig:A_cospectral_constructive1} to Construction  \ref{cons:A_cospectral_constructive}. In the figure, the subgraphs \(G^1\) and \(G^2\) are highlighted with red vertices and edges and the black vertices can be regarded as \(H\). The fixed vertex of this construction is $v_c = v_1$. The vertices \(v_{1}^1\) and \(v_{1}^2\) are indicated with blue. 
It can be seen that the red vertices \(v_{2}\), \(v_{3}\) and \(v_{4}\) are all in the same orbit under $\Aut(G,v_1)$.
The edges connecting vertices of \(H\) and $G^1$, $G^2$ also satisfy the requirements of Construction \ref{cons:A_cospectral_constructive}. By Theorem \ref{thm:A_cospectral_constructive}, vertices \(v_{1}^1\) and \(v_{1}^2\) are $A$-cospectral. 

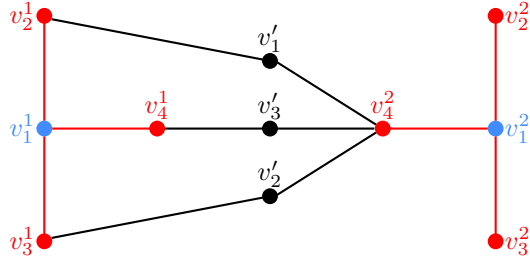
\begin{figure}[h]
    \centering
    \begin{tikzpicture}[scale = 1.5]
    
        \fill[blue] (-2, -0) circle (2pt) node[left] {\(v_{1}^1\)};
        \fill[red] (-2, 1)   circle (2pt) node[left] {\(v_{2}^1\)};
        \fill[red] (-2, -1)  circle (2pt) node[left] {\(v_{3}^1\)};
        \fill[red] (-1, -0)  circle (2pt) node[above] {\(v_{4}^1\)};

        \fill[blue] (2, -0) circle (2pt) node[right] {\(v_{1}^2\)};
        \fill[red] (2, 1)   circle (2pt) node[right] {\(v_{2}^2\)};
        \fill[red] (2, -1)  circle (2pt) node[right] {\(v_{3}^2\)};
        \fill[red] (1, -0)  circle (2pt) node[above] {\(v_{4}^2\)};

        \draw[red, thick] (-2,0.07) -- (-2,1);
        \draw[red, thick] (-2,-0.07) -- (-2,-1);
        \draw[red, thick] (-1.93,0) -- (-1,0);

        \draw[red, thick] (2,0.07) -- (2,1);
        \draw[red, thick] (2,-0.07) -- (2,-1);
        \draw[red, thick] (1.93,0) -- (1,0);

        \fill (0, 0.6)  circle (2pt) node[above] {\(v_{1}'\)};
        \fill (0, -0.6) circle (2pt) node[above] {\(v_{2}'\)};
        \fill (0, 0)    circle (2pt) node[above] {\(v_{3}'\)};

        \draw[thick] (-1.935,0.97) -- (-0, 0.6);
        \draw[thick] (-1.935,-0.97) -- (-0, -0.6);
        \draw[thick] (-0.93,0) -- (-0, 0);

        \draw[thick] (0.94,0.035) -- (0.06, 0.59);
        \draw[thick] (0.94,-0.035) -- (0.06, -0.59);
        \draw[thick] (0.925,0) -- (0, 0);
    \end{tikzpicture}
\caption{The red subgraphs together with the blue vertices are isomorphic to one another and represent the subgraphs $G^1$ and $G^2$ of Construction \ref{cons:A_cospectral_constructive}. Their connections to the black vertices in the middle, which represent the subgraph $H$, satisfy the conditions of Construction \ref{cons:A_cospectral_constructive} and so by Theorem \ref{thm:A_cospectral_constructive}, vertices $v_1^1$ and $v_1^2$ are $A$-cospectral.}\label{fig:A_cospectral_constructive1}
    
\end{figure}
\end{example}

\begin{example}\label{ex:A_cospectral_constructive2}
    Another example of cospectral vertices, taken from \cite{Isospectral_red_cospectral}, is depicted in Figure \ref{fig:A_cospectral_constructive2}, where the cospectrality follows by Construction \ref{cons:A_cospectral_constructive} and Theorem \ref{thm:A_cospectral_constructive}.

    \begin{figure}[h]
        \centering
        \begin{tikzpicture}[scale = 1.5]
            
            \fill[red] (-0.9, -0.5) circle (2pt) node[above] {};
            \fill[blue] (-0.9, 0.5) circle (2pt) node[left] {};
            \fill[red] (0, 0) circle (2pt) node[below] {};

            \fill[red] (2.1, 0.5) circle (2pt) node[above] {};
            \fill[red] (2.1, -0.5) circle (2pt) node[left] {};
            \fill[blue] (3, 0) circle (2pt) node[below] {};
            
            \fill (1, 1.4) circle (2pt) node[left]{};
            \fill (1, 0.7) circle (2pt) node[below] {};
            \fill (1, -0.7) circle (2pt) node[below] {};

            \draw[red, thick] (-0.9, 0.43) --  (-0.9, -0.5);
            \draw[red, thick] (-0.85, 0.47) --  (0, 0);
            \draw[red, thick] (-0.9, -0.5) --  (0, 0);

            \draw[red, thick] (2.95, 0.02) --  (2.1, 0.5);
            \draw[red, thick] (2.95, -0.02) --  (2.1, -0.5);
            \draw[red, thick](2.1, -0.5) --  (2.1, 0.5);
            
            \draw[thick]  (1, 1.4) -- (1, 0.7);
            \draw[thick]  (1, 0.7) -- (0.057, 0.046);
            \draw[thick]  (1, 0.7) -- (2.03, 0.52);
            \draw[thick]  (1, -0.7) -- (2.03, -0.52);
            \draw[thick]  (1, -0.7) -- (0.051, -0.051);

        \end{tikzpicture}
        \caption{The red subgraphs correspond to $G^1$ and $G^2$ in Construction \ref{cons:A_cospectral_constructive}, and the blue vertices correspond to the $A$-cospectral vertices.}\label{fig:A_cospectral_constructive2}
    \end{figure}
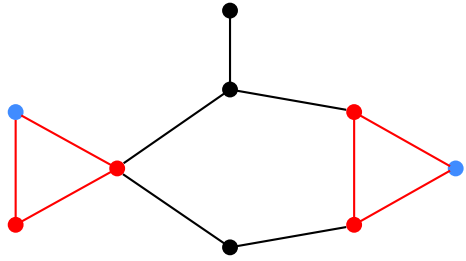

\end{example}

\begin{remark}
The vector space $\mathbb{R}^n$ associated with the graph $G$, together with the adjacency matrix $A$ of a graph, can be thought of as a dynamical system in which $\mathbb{R}^n$ is the phase space and $A$ is the linear map defining the dynamics. The vectors $\boldsymbol{x} \in \mathbb{R}^n$ then become the ``states" of the dynamical system, having values on the vertices of $G$. From this point of view, Theorem \ref{thm:A_cospectral_constructive} becomes clearer. Indeed, the theorem works because in the dynamical system of the graph $\widetilde{G}$ induced by its adjacency matrix $\widetilde{A}$, the ``initial condition" $\boldsymbol{v}_{c}^1 - \boldsymbol{v}_{c}^2$ follows the trajectory $\boldsymbol{v}_{c}^1 - \boldsymbol{v}_{c}^2, \widetilde{A}(\boldsymbol{v}_{c}^1 - \boldsymbol{v}_{c}^2), \widetilde{A}^2(\boldsymbol{v}_{c}^1 - \boldsymbol{v}_{c}^2)$ and so on, whose states $\widetilde{A}^k (\boldsymbol{v}_{c}^1 - \boldsymbol{v}_{c}^2)$ always have the value 0 in their components corresponding to the subgraph $H$; that is,
\[
(\widetilde{A}^k (\boldsymbol{v}_{c}^1 - \boldsymbol{v}_{c}^2))_{v'_\alpha} = 0,\quad \forall v'_\alpha \in V(H), \forall k \in \mathbb{N}.
\]In essence, this means that with this initial condition, the dynamical system is not affected by the presence of these vertices and in practice behaves like two identical disjoint subdynamics on the subgraphs $G^1$ and $G^2$, which is captured by 
\[ 
(\widetilde{A}^k (\boldsymbol{v}_{c}^1 - \boldsymbol{v}_{c}^2))_{v_{m}^1} = -(\widetilde{A}^k (\boldsymbol{v}_{c}^1 - \boldsymbol{v}_{c}^2))_{v_{m}^2},\quad \forall v_m \in V(G), \forall k \in \mathbb{N}.
\] 
This last equality implies $\boldsymbol{v}_{c}^1 + \boldsymbol{v}_{c}^2 \perp \langle \boldsymbol{v}_{c}^1 - \boldsymbol{v}_{c}^2 \rangle_{\widetilde{A}}$, which is equivalent to cospectrality.
    
\end{remark}

One can modify Construction \ref{cons:A_cospectral_constructive} while still preserving the cospectrality as long as one does not break the inherent dynamical symmetry. One such operation is given below in Construction \ref{cons:modify_cospectral_construction}.  

\begin{construction}[$A$-cospectral]\label{cons:modify_cospectral_construction}

Let $\widetilde{G}$ be a graph obtained by Construction \ref{cons:A_cospectral_constructive} from graphs $G$ and $H$, with the fixed vertex $v_c \in V(G)$. 
Consider an orbit $[v_j]_{v_c}$ of vertices under the action of $\Aut(G, v_c)$ on $G$. In the graph $\widetilde{G}$, there are two copies of $G$ by construction, $G^1$ and $G^2$, so, these orbits also have two copies, denoted by
\[
[v_j^1]_{v_c} \subseteq V(G^1), \quad [v_j^2]_{v_c} \subseteq V(G^2).
\]
Now construct a graph $\widehat{G}$ from $\widetilde{G}$ by connecting vertices of $[v_j^1]_{v_c}$ with vertices of $[v_j^2]_{v_c}$ in a bijective manner; namely, connect each  $ v_{m}^1 \in [v_j^1]_{v_c}$ to exactly one vertex in $[v_j^2]_{v_c}$ such that every $v_{k}^2 \in [v_j^2]_{v_c}$ is also connected to exactly one vertex in $[v_j^1]_{v_c}$.
    
\end{construction}

\begin{lemma}\label{lem:modify_cospectral_construction} 
Let $\widetilde{G}$ be a graph obtained by Construction \ref{cons:A_cospectral_constructive}, 
with the pair of $A$-cospectral vertices $v_c^1$ and $v_c^2$.
If $\widetilde{G}$ is modified as explained in Construction \ref{cons:modify_cospectral_construction} to obtain a new graph $\widehat{G}$, then, $v_c^1$ and $v_c^2$ remain $A$-cospectral in $\widehat{G}$.  
\end{lemma}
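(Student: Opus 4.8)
The plan is to re-run the same induction as in Theorem \ref{thm:A_cospectral_constructive}, now using the adjacency matrix $\widehat{A}$ of $\widehat{G}$, and to verify that the three invariants \ref{thm1:claim1}, \ref{thm1:claim2}, \ref{thm1:claim3} survive the newly added matching edges. The key observation is that $\widehat{G}$ differs from $\widetilde{G}$ only by a set of edges $v_m^1 \sim v_k^2$ joining a copy-$1$ vertex to a copy-$2$ vertex, each with $v_m, v_k$ lying in the same orbit $[v_j]_{v_c}$. So when I expand $(\widehat{A}^k(\boldsymbol{v}_c^1 - \boldsymbol{v}_c^2))_{v_m^1}$ in terms of $\widehat{A}^{k-1}$ applied to neighbours, the sum now contains, in addition to the old $G^1$-internal and $H$-neighbour terms, one extra contribution $(\widehat{A}^{k-1}(\boldsymbol{v}_c^1-\boldsymbol{v}_c^2))_{v_k^2}$ coming from the matched vertex $v_k^2 \in [v_j^2]_{v_c}$ (and symmetrically an extra $G^1$-term when expanding the $v_k^2$ component).

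First I would set up the induction exactly as before, asserting \ref{thm1:claim1}--\ref{thm1:claim3} for $\widehat{A}$ with base case $k=0$ trivial. The only parts of the inductive step that change are the expansions of the $G^1$- and $G^2$-components (claims \ref{thm1:claim2} and \ref{thm1:claim3}); claim \ref{thm1:claim1} for $H$-vertices is unaffected because the new edges are internal to $G^1 \cup G^2$ and touch no vertex of $H$. The heart of the argument is then to show the new cross term is consistent with the sign-flip invariant \ref{thm1:claim2}. Concretely, when I compute $(\widehat{A}^k(\boldsymbol{v}_c^1-\boldsymbol{v}_c^2))_{v_m^1}$ the extra summand is $(\widehat{A}^{k-1}(\boldsymbol{v}_c^1-\boldsymbol{v}_c^2))_{v_k^2}$ where $v_k^2$ is the matched partner; by the induction hypothesis for \ref{thm1:claim2} this equals $-(\widehat{A}^{k-1}(\boldsymbol{v}_c^1-\boldsymbol{v}_c^2))_{v_k^1}$, and since $v_k \in [v_j]_{v_c} = [v_m]_{v_c}$, the induction hypothesis for \ref{thm1:claim3} gives $(\widehat{A}^{k-1}(\boldsymbol{v}_c^1-\boldsymbol{v}_c^2))_{v_k^1} = (\widehat{A}^{k-1}(\boldsymbol{v}_c^1-\boldsymbol{v}_c^2))_{v_m^1}$. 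The corresponding expansion of $(\widehat{A}^k(\boldsymbol{v}_c^1-\boldsymbol{v}_c^2))_{v_m^2}$ picks up the symmetric extra term $(\widehat{A}^{k-1}(\boldsymbol{v}_c^1-\boldsymbol{v}_c^2))_{v_k^1}$, and tracing the same identities shows the two extra terms are exactly negatives of each other, so the relation $(\widehat{A}^k(\cdots))_{v_m^1} = -(\widehat{A}^k(\cdots))_{v_m^2}$ is preserved.

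The step I expect to require the most care is claim \ref{thm1:claim3} (constancy across an orbit) in $\widehat{G}$, because the bijective matching need not be itself induced by an automorphism of $G$, so I cannot blindly transport the new edges by $\gamma \in \Aut(G,v_c)$ the way the old internal edges were transported. The resolution is that \ref{thm1:claim3} only asserts that the value $(\widehat{A}^{k-1}(\boldsymbol{v}_c^1-\boldsymbol{v}_c^2))_{v^1}$ is \emph{constant} on each orbit $[v_j]_{v_c}$, so every matched partner $v_k^2$, regardless of which vertex of $[v_j^2]_{v_c}$ it is, contributes the same value $(\widehat{A}^{k-1}(\cdots))_{v_k^2}$, namely the common orbit value (this uses the consequence of \ref{thm1:claim2} and \ref{thm1:claim3} noted at the start of the previous proof). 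Hence the total extra contribution at any $v_m^1 \in [v_j^1]_{v_c}$ is the same for all $m$, which keeps the $G^1$-components equal across the orbit and lets the original $\gamma$-transport argument go through for the unchanged edges. Once all three claims are re-established, the conclusion is verbatim that of Theorem \ref{thm:A_cospectral_constructive}: \ref{thm1:claim2} gives $(\widehat{A}^k(\boldsymbol{v}_c^1-\boldsymbol{v}_c^2))^{T}(\boldsymbol{v}_c^1+\boldsymbol{v}_c^2)=0$ for all $k$, whence $\langle \boldsymbol{v}_c^1+\boldsymbol{v}_c^2\rangle_{\widehat{A}} \perp \langle \boldsymbol{v}_c^1-\boldsymbol{v}_c^2\rangle_{\widehat{A}}$, so $v_c^1$ and $v_c^2$ remain $A$-cospectral in $\widehat{G}$.
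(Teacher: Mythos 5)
Your proposal is correct and follows essentially the same route as the paper's proof, which re-runs the induction of Theorem \ref{thm:A_cospectral_constructive} and observes that the matching edges contribute compatible terms to claims \ref{thm1:claim1}--\ref{thm1:claim3}; your write-up is in fact more detailed than the paper's, in particular in correctly noting that the matching need not come from an automorphism and that orbit-constancy from claim \ref{thm1:claim3} is what makes the identity of the matched partner irrelevant.
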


\begin{remark}\label{rem:modify_cospectral_construction}
Note that one can use Construction \ref{cons:modify_cospectral_construction} in different ways to create different graphs. An example is given in Figure \ref{fig:different_one_to_one}.
\end{remark}

    \begin{figure}[h]
        \centering
      \begin{tikzpicture}[scale = 1.5]
            
            \fill[red] (-9.5, 0.8) circle (2pt) node[above] {};
            \fill[blue] (-10, 0) circle (2pt) node[left] {};
            \fill[red] (-9.5, -0.8) circle (2pt) node[below] {};

            \fill[red] (-8.5, 0.8) circle (2pt) node[above] {};
            \fill[red] (-8.5, -0.8) circle (2pt) node[left] {};
            \fill[blue] (-8, 0) circle (2pt) node[below] {};
            
            \fill (-9, 0.4) circle (2pt) node[left]{};
            \fill (-9, 1.2) circle (2pt) node[below] {};
            \fill (-9, -1.2) circle (2pt) node[below] {};

            \draw[red, thick] (-9.968, 0.058) -- (-9.5, 0.8);
            \draw[red, thick] (-9.968, -0.058) -- (-9.5, -0.8);

            \draw[green, thick] (-8.565, 0.8) -- (-9.435, 0.8);
            \draw[red, thick] (-8.5, 0.8) -- (-8.032, 0.058);
            \draw[red, thick](-8.5, -0.8) -- (-8.032, -0.058);
            \draw[green, thick] (-8.565, -0.8) -- (-9.435, -0.8);
            
            \draw[thick]  (-9, 0.4) -- (-9.478, -0.747);
            \draw[thick]  (-9, 0.4) -- (-8.545, 0.761);
            \draw[thick]  (-9, 1.2) -- (-8.545, 0.839);
            \draw[thick]  (-9, 1.2) -- (-9.455, 0.839);
            \draw[thick]  (-9, -1.2) -- (-8.518, 0.732);
            \draw[thick]  (-9, -1.2) -- (-9.455, -0.839);

            \fill[red] (-6.5, 0.8) circle (2pt) node[above] {};
            \fill[blue] (-7, 0) circle (2pt) node[left] {};
            \fill[red] (-6.5, -0.8) circle (2pt) node[below] {};

            \fill[red] (-5.5, 0.8) circle (2pt) node[above] {};
            \fill[red] (-5.5, -0.8) circle (2pt) node[left] {};
            \fill[blue] (-5, 0) circle (2pt) node[below] {};
            
            \fill (-6, 0.4) circle (2pt) node[left]{};
            \fill (-6, 1.2) circle (2pt) node[below] {};
            \fill (-6, -1.2) circle (2pt) node[below] {};

            \draw[red, thick] (-6.968, 0.058) -- (-6.5, 0.8);
            \draw[red, thick] (-6.968, -0.058) -- (-6.5, -0.8);

            \draw[green, thick] (-5.53, 0.75) -- (-6.47, -0.75);
            \draw[red, thick] (-5.5, 0.8) -- (-5.032, 0.058);
            \draw[red, thick](-5.5, -0.8) -- (-5.032, -0.058);
            \draw[green, thick] (-5.53, -0.75) -- (-6.47, 0.75);
            
            \draw[thick]  (-6, 0.4) -- (-6.478, -0.747);
            \draw[thick]  (-6, 0.4) -- (-5.545, 0.761);
            \draw[thick]  (-6, 1.2) -- (-5.545, 0.839);
            \draw[thick]  (-6, 1.2) -- (-6.455, 0.839);
            \draw[thick]  (-6, -1.2) -- (-5.518, 0.732);
            \draw[thick]  (-6, -1.2) -- (-6.455, -0.839);
           
        \end{tikzpicture}
        \caption{Construction \ref{cons:modify_cospectral_construction} applied to a graph obtained from Construction \ref{cons:A_cospectral_constructive}. Here, the red subgraphs correspond to $G^1$ and $G^2$ in Construction \ref{cons:A_cospectral_constructive}, with the $A$-cospectral vertices indicated in blue. The green edges are those that are added in applying Construction \ref{cons:modify_cospectral_construction} in two different ways. It is easy to see that the two graphs are not isomorphic (in the right graph the two vertices with degrees 4 and 5 are connected, whereas in the left graph they are not). By Lemma \ref{lem:modify_cospectral_construction}, the blue vertices are still $A$-cospectral in both graphs.}
        \label{fig:different_one_to_one}
    \end{figure}
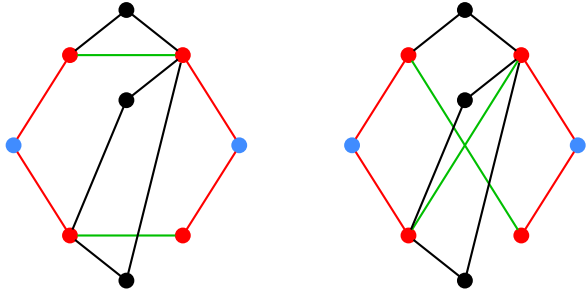

\begin{example}
Using Theorem \ref{thm:A_cospectral_constructive} and Lemma \ref{lem:modify_cospectral_construction} we can illuminate many examples of cospectral vertices and latent symmetries in the literature. Three such examples are provided in Figure \ref{fig:explain_literature_examples} with the subgraphs that correspond to $G^1$ and $G^2$ indicated in red and the $A$-cospectral vertices indicated in blue.

    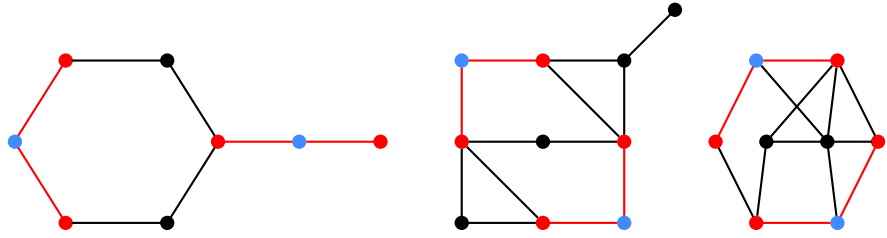
\begin{figure}[htb]
        \centering
      \begin{tikzpicture}[scale = 1.35]
            
           \fill[red] (-8.9, 0.8) circle (2pt) node[above] {};
\fill[blue] (-9.4, 0) circle (2pt) node[left] {};
\fill[red] (-8.9, -0.8) circle (2pt) node[below] {};

\fill[red] (-5.8, 0) circle (2pt) node[above] {};
\fill[red] (-7.4, 0) circle (2pt) node[left] {};
\fill[blue] (-6.6, 0) circle (2pt) node[below] {};

\fill (-7.9, 0.8) circle (2pt) node[left]{};
\fill (-7.9, -0.8) circle (2pt) node[below] {};

\draw[red, thick] (-9.368, 0.058) -- (-8.9, 0.8);
\draw[red, thick] (-9.368, -0.058) -- (-8.9, -0.8);

\draw[thick] (-7.935, 0.8) -- (-8.84, 0.8);
\draw[thick] (-7.9, 0.8) -- (-7.432, 0.058);
\draw[thick] (-7.9, -0.8) -- (-7.432, -0.058);
\draw[thick] (-7.935, -0.8) -- (-8.84, -0.8);

\draw[red, thick] (-6.67, 0) -- (-7.4, 0);
\draw[red, thick] (-6.53, 0) -- (-5.8, 0);

            \fill[red] (-5, 0) circle (2pt) node[above] {};
            \fill[blue] (-5, 0.8) circle (2pt) node[left] {};
            \fill[red] (-4.2, 0.8) circle (2pt) node[below] {};

            \fill[red] (-4.2, -0.8) circle (2pt) node[above] {};
            \fill[red] (-3.4,0) circle (2pt) node[left] {};
            \fill[blue] (-3.4, -0.8) circle (2pt) node[below] {};
            
            \fill (-5, -0.8) circle (2pt) node[left]{};
            \fill (-3.4, 0.8) circle (2pt) node[left]{};
            \fill (-2.9, 1.3) circle (2pt) node[left]{};
            \fill (-4.2, 0) circle (2pt) node[below] {};

            \draw[red, thick] (-5, 0.73) -- (-5, 0);
            \draw[red, thick] (-4.93, 0.8) -- (-4.2, 0.8);
            
            \draw[red, thick] (-3.4, -0.73) -- (-3.4, 0);
            \draw[red, thick] (-3.47, -0.8) -- (-4.2, -0.8);
            
            \draw[thick] (-5, -0.8) -- (-4.27, -0.8);
            \draw[thick] (-5, -0.8) -- (-5,-0.07);
            \draw[thick] (-4.2, 0) -- (-3.47, 0);
            \draw[thick] (-4.2, 0) -- (-4.93, 0);
            \draw[thick] (-3.4, 0.8) -- (-3.4, 0.07);
            \draw[thick] (-3.4, 0.8) -- (-4.13, 0.8);
            \draw[thick] (-3.4, 0.8) -- (-2.9, 1.3);
            \draw[thick] (-4.95,-0.05) -- (-4.25, -0.75);
            \draw[thick] (-4.15, 0.75) -- (-3.45, 0.05);

\fill[red] (-1.3, 0.8) circle (2pt) node[above] {};
\fill[blue] (-2.1, 0.8) circle (2pt) node[left] {};
\fill[red] (-2.5, 0) circle (2pt) node[below] {};

\fill[red] (-2.1, -0.8) circle (2pt) node[above] {};
\fill[red] (-0.9, 0) circle (2pt) node[left] {};
\fill[blue] (-1.3, -0.8) circle (2pt) node[below] {};

\fill (-2.0, 0) circle (2pt) node[left]{};
\fill (-1.4, 0) circle (2pt) node[left]{};

\draw[red, thick] (-2.13, 0.74) -- (-2.5, 0);
\draw[red, thick] (-2.03, 0.8) -- (-1.3, 0.8);

\draw[red, thick] (-1.27, -0.74) -- (-0.9, 0);
\draw[red, thick] (-1.37, -0.8) -- (-2.1, -0.8);

\draw[thick] (-1.31, -0.73) -- (-1.4, 0);
\draw[thick] (-2.05, 0.75) -- (-1.4, 0);
\draw[thick] (-0.96, 0) -- (-1.4, 0);
\draw[thick] (-1.31, 0.73) -- (-1.4, 0);
\draw[thick] (-2.09, -0.73) -- (-2.0, 0);
\draw[thick] (-1.35, 0.75) -- (-2.0, 0);
\draw[thick] (-2.47, -0.06) -- (-2.13, -0.74);
\draw[thick] (-1.27, 0.74) -- (-0.93, 0.06);
\draw[thick] (-2.0, 0) -- (-1.4, 0);

        \end{tikzpicture}
        \caption{All three graphs are from \cite{Hidden_symmetry}. In all of them, the blue vertex pairs are not symmetric but $A$-cospectral, and therefore latently symmetric. 
        The leftmost graph can be obtained by Construction \ref{cons:A_cospectral_constructive} and the other two can be obtained by  Construction \ref{cons:modify_cospectral_construction}. 
}
        \label{fig:explain_literature_examples}
    \end{figure}
\end{example}

Although all examples of $A$-cospectral vertices provided above are constructed with small subgraphs $G$ and $H$, there is no bound for the size of either $G$ or $H$ in Construction \ref{cons:A_cospectral_constructive}.

There is also a slightly different Laplacian version of Construction \ref{cons:A_cospectral_constructive} where instead of vertices of \(G^1\) and \(G^2\) connecting to vertices of \(H\), they connect to each other, respecting the orbit structure.

\begin{construction}[$L$-cospectral]\label{cons:L_cospectral_constructive}
Let \( G \) be a graph. Select a vertex in \( v_c \in V(G) \); we refer to it as the \textit{fixed vertex} of the construction.  
Divide the vertices of \( G \) into orbits under the action of \(\mathrm{Aut}(G, v_c)\). 
Now, take two copies of \( G \) and label them \( G^1(V^1,E^1) \) and \( G^2(V^2,E^2) \).  
(Denote the vertices of \( G^1 \) and \( G^2 \) as \( v_{j}^1, v_{k}^2, \) etc.) Finally, construct a new graph \( \widetilde{G} \) by
adding any number of edges between \( G^1 \) and \( G^2 \) subject to the following condition:  
Every edge connecting a vertex \( v_{j}^1 \in V(G^1) \) and a vertex \( v_{m}^2 \in V(G^2) \) should be between vertices of the same orbit; that is if $ (v_{j}^1,v_{m}^2) \in E(\widetilde{G})$, then we must have $v_{m} \in [v_j]_{v_c}$.
A schematic representation of this construction is shown in Figure \ref{fig:fig_L_cospectral_constructive}.
\end{construction}

    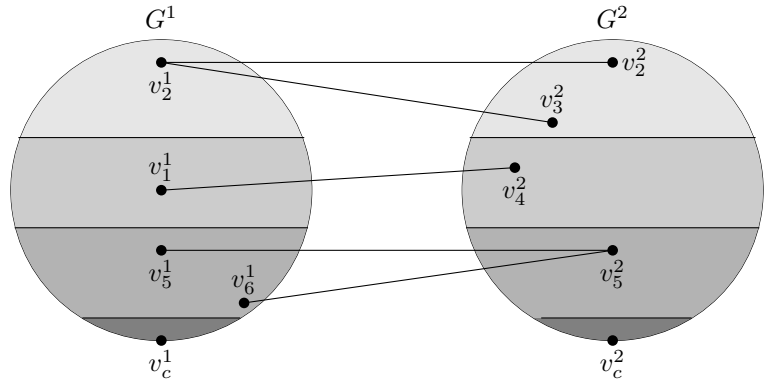
\begin{figure}[bth]
        \centering
        \begin{tikzpicture}
            \draw (0,0) circle (2cm);
            \node at (0, 2.3) {\( G^1 \)};
            
            \begin{scope}
                \clip (0,0) circle (2cm);
                \fill[gray!20] (-2, 0.7) rectangle (2, 2);
                \fill[gray!40] (-2, -0.5) rectangle (2, 0.7);
                \fill[gray!60] (-2, -1.7) rectangle (2, -0.5);
                \fill[gray!100] (-2, -2) rectangle (2, -1.7);
            \end{scope}
            \draw (-1.9, 0.7) -- (1.9, 0.7);
            \draw (-1.95, -0.5) -- (1.95, -0.5);
            \draw (-1.05, -1.7) -- (1.05, -1.7);
            
            \draw (6,0) circle (2cm);
            \node at (6, 2.3) {\( G^2 \)};
            
            \begin{scope}
                \clip (6,0) circle (2cm);
                \fill[gray!20] (4, 0.7) rectangle (8, 2);
                \fill[gray!40] (4, -0.5) rectangle (8, 0.7);
                \fill[gray!60] (4, -1.7) rectangle (8, -0.5);
                \fill[gray!100] (4, -2) rectangle (8, -1.7);
            \end{scope}
            \draw (4.1, 0.7) -- (7.9, 0.7);
            \draw (4.05, -0.5) -- (7.95, -0.5);
            \draw (5.05, -1.7) -- (7.05, -1.7);

            \fill (0, 1.7) circle (2pt) node[below] {\( v_{2}^1 \)};
            \fill (0, 0) circle (2pt) node[above] {\( v_{1}^1 \)};            
            \fill (0, -2) circle (2pt) node[below] {\( v_{c}^1 \)};
            \fill (0, -0.8) circle (2pt) node[below] {\( v_{5}^1 \)};
            \fill (1.1, -1.5) circle (2pt) node[above] {\( v_{6}^1 \)};
            
            \fill (6, 1.7) circle (2pt) node[right] {\( v_{2}^2 \)};
            \fill (5.2, 0.9) circle (2pt) node[above] {\( v_{3}^2 \)};
            \fill (4.7, 0.3) circle (2pt) node[below] {\( v_{4}^2 \)};
            \fill (6, -0.8) circle (2pt) node[below] {\( v_{5}^2 \)};            
            \fill (6, -2) circle (2pt) node[below] {\( v_{c}^2 \)};

            \draw (0, 0) -- (4.7, 0.3);
            \draw (0, -0.8) -- (6, -0.8);
            \draw (1.1, -1.5) -- (6, -0.8);
            \draw (5.2, 0.9) -- (0, 1.7);
            \draw (6, 1.7) -- (0, 1.7);
        \end{tikzpicture}
        \caption{Construction \ref{cons:L_cospectral_constructive} illustrated. The graphs \( G^1 \) and \( G^2 \) are divided into orbits under the action of \( \mathrm{Aut}(G,v_{c}) \). These orbits are shown by shades of gray, with the darkest gray containing only \(v_{c}\). The vertices \(v_{5}\), \(v_{6}\); \(v_{1}\), \(v_{4}\) and \(v_{2}\), \(v_{3}\) are in the same orbit. Every edge between vertices of \( G^1 \) and \( G^2 \) are between vertices belonging to the copy of the same orbit. By Theorem \ref{thm:L_cospectral_constructive}, the vertices \( v_{c}^1 \) and \( v_{c}^2 \) are $L$-cospectral in the graph \( \widetilde{G} \).}
        \label{fig:fig_L_cospectral_constructive}
    \end{figure}

\begin{theorem}\label{thm:L_cospectral_constructive}
Let $G$ be a graph and let $\widetilde{G}$ be obtained from $G$ via Construction \ref{cons:L_cospectral_constructive}, with the fixed vertex $v_c \in V(G)$
and subgraphs $G^1$, $G^2$. 
Then \( v_{c}^1 \) and \( v_{c}^2 \) are $L$-cospectral in \( \widetilde{G} \).  
\end{theorem}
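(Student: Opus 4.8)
The plan is to mirror the proof of Theorem~\ref{thm:A_cospectral_constructive}. Writing $\boldsymbol{x} = \boldsymbol{v}_c^1 - \boldsymbol{v}_c^2$ and letting $\widetilde{L}$ be the Laplacian of $\widetilde{G}$, it suffices to show $\widetilde{L}^k \boldsymbol{x} \perp (\boldsymbol{v}_c^1 + \boldsymbol{v}_c^2)$ for every $k \in \mathbb{N}$; since $\widetilde{L}$ is symmetric, the identity $\langle \widetilde{L}^j(\boldsymbol{v}_c^1+\boldsymbol{v}_c^2), \widetilde{L}^k\boldsymbol{x}\rangle = \langle \boldsymbol{v}_c^1+\boldsymbol{v}_c^2, \widetilde{L}^{j+k}\boldsymbol{x}\rangle$ then upgrades this to $\langle \boldsymbol{v}_c^1 + \boldsymbol{v}_c^2\rangle_{\widetilde{L}} \perp \langle \boldsymbol{v}_c^1 - \boldsymbol{v}_c^2\rangle_{\widetilde{L}}$, exactly as in the closing lines of that proof. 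The new difficulty is that the clean vertex-by-vertex invariants used there do not survive the passage from $\widetilde{A}$ to $\widetilde{L} = \widetilde{D} - \widetilde{A}$: the degree terms, together with the fact that Construction~\ref{cons:L_cospectral_constructive} allows the added edges to attach unevenly to the vertices of a single orbit, destroy both the antisymmetry $(\widetilde{L}^k\boldsymbol{x})_{v_m^1} = -(\widetilde{L}^k\boldsymbol{x})_{v_m^2}$ and the orbit-constancy of $\widetilde{L}^k\boldsymbol{x}$ (already for $G = K_{1,2}$ joined to its second copy by one cross-edge, both fail at $k = 2$). One must therefore track a coarser, orbit-averaged quantity.

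For $\boldsymbol{y} \in \mathbb{R}^{V(\widetilde{G})}$ and an orbit $O = [v_j]_{v_c}$, set $S_O^i(\boldsymbol{y}) = \sum_{v_m \in O}(\boldsymbol{y})_{v_m^i}$ for $i \in \{1,2\}$, put $T_O(\boldsymbol{y}) = S_O^1(\boldsymbol{y}) + S_O^2(\boldsymbol{y})$, and let $w_{v_m} = (\boldsymbol{y})_{v_m^1} + (\boldsymbol{y})_{v_m^2}$ be the symmetrization of $\boldsymbol{y}$ over the two copies, regarded as a vector on $V(G)$, so that $T_O(\boldsymbol{y}) = \sum_{v_m \in O} w_{v_m}$. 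I would prove by induction on $k$ the single claim that $T_O(\widetilde{L}^k\boldsymbol{x}) = 0$ for every orbit $O$. The crux is a cancellation in the inductive step. Letting $e_m^i$ be the number of added edges incident to $v_m^i$, expanding $(\widetilde{L}\boldsymbol{y})_{v_m^i}$ and summing over $v_m \in O$ shows that the cross-edge part of the degree term combines with the cross-neighbour term to give, for each copy, a signed sum over the added edges of $O$: writing such an edge as $(v_a^1, v_b^2)$ (with $v_a, v_b \in O$ by Construction~\ref{cons:L_cospectral_constructive}), its contribution to $S_O^1(\widetilde{L}\boldsymbol{y})$ is $(\boldsymbol{y})_{v_a^1} - (\boldsymbol{y})_{v_b^2}$ and its contribution to $S_O^2(\widetilde{L}\boldsymbol{y})$ is the negative of this. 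Hence the cross-edge contributions cancel in $T_O(\widetilde{L}\boldsymbol{y})$, leaving only the internal part, which assembles exactly into
\[ T_O(\widetilde{L}\boldsymbol{y}) = \sum_{v_m \in O}\Big( d_G(v_m)\, w_{v_m} - \sum_{v_\ell \sim_G v_m} w_{v_\ell}\Big) = \sum_{v_m\in O}(L w)_{v_m} = \langle \mathbb{1}_O, L w\rangle, \]
where $L$ is the Laplacian of the single graph $G$ and $\mathbb{1}_O$ is the indicator vector of $O$ in $\mathbb{R}^{V(G)}$.

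To close the induction I would invoke the equitable-partition property of automorphism orbits: since every $\gamma \in \mathrm{Aut}(G, v_c)$ preserves degrees, adjacency, and each orbit setwise, the vector $L\mathbb{1}_O$ is constant on orbits, i.e. $L\mathbb{1}_O = \sum_{O'} c_{O,O'}\,\mathbb{1}_{O'}$; equivalently, the span of the orbit indicators is $L$-invariant. Combined with self-adjointness of $L$, the previous display becomes the closed linear recursion $T_O(\widetilde{L}\boldsymbol{y}) = \langle L\mathbb{1}_O, w\rangle = \sum_{O'} c_{O,O'}\,T_{O'}(\boldsymbol{y})$, expressing the new orbit sums solely through the old ones. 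The base case is immediate, since the symmetrization of $\boldsymbol{x} = \boldsymbol{v}_c^1 - \boldsymbol{v}_c^2$ vanishes identically, giving $T_O(\boldsymbol{x}) = 0$ for all $O$; the recursion then propagates $T_O(\widetilde{L}^k\boldsymbol{x}) = 0$ to every $k$. Finally, because $v_c$ is fixed by all of $\mathrm{Aut}(G, v_c)$ its orbit is the singleton $\{v_c\}$, so the instance $O = \{v_c\}$ reads $(\widetilde{L}^k\boldsymbol{x})_{v_c^1} + (\widetilde{L}^k\boldsymbol{x})_{v_c^2} = 0$, which is precisely the required orthogonality $\widetilde{L}^k\boldsymbol{x} \perp (\boldsymbol{v}_c^1 + \boldsymbol{v}_c^2)$. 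I expect the main obstacle to be locating the correct invariant and verifying the cross-edge cancellation; once these are in place, the equitable-partition reduction and the concluding symmetry argument are routine.
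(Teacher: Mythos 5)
Your proof is correct, but it takes a genuinely different route from the paper's. The paper iterates $\widetilde{L}$ on the \emph{sum} vector $\boldsymbol{v}_c^1+\boldsymbol{v}_c^2$ and proves, by a joint induction, two \emph{pointwise} invariants: $(\widetilde{L}^k(\boldsymbol{v}_c^1+\boldsymbol{v}_c^2))_{v_m^1}=(\widetilde{L}^k(\boldsymbol{v}_c^1+\boldsymbol{v}_c^2))_{v_m^2}$ and constancy on orbits. For the sum vector these invariants do survive the passage to the Laplacian, because the extra degree contribution $e_m^i$ at $v_m^i$ appears paired with the subtracted cross-neighbour values, and by the induction hypothesis each bracket $(\widetilde{L}^{k-1}(\boldsymbol{v}_c^1+\boldsymbol{v}_c^2))_{v_m^1}-(\widetilde{L}^{k-1}(\boldsymbol{v}_c^1+\boldsymbol{v}_c^2))_{v_\ell^2}$ with $v_\ell\in[v_m]_{v_c}$ vanishes termwise. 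You instead iterate on the \emph{difference} vector, correctly observe that its pointwise antisymmetry and orbit-constancy are not preserved in general (your diagnosis is right for an asymmetric cross-edge such as $(v_1^1,v_2^2)$; for the symmetric choice $(v_1^1,v_1^2)$ the copy-swap is an automorphism and antisymmetry actually persists, but orbit-constancy still fails), and therefore track the coarser orbit sums $T_O$. Your cross-edge cancellation in $T_O(\widetilde{L}\boldsymbol{y})$, the identity $T_O(\widetilde{L}\boldsymbol{y})=\langle L\boldsymbol{1}_O, w\rangle$, and the closure of the recursion via $L$-invariance of the span of orbit indicators (since $L$ commutes with $\Aut(G,v_c)$) all check out, and the singleton orbit $\{v_c\}$ delivers exactly the needed orthogonality. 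What each approach buys: the paper's choice of the sum vector makes the harder step disappear and yields stronger pointwise information about the iterates, at the cost of a two-claim simultaneous induction; your version is a single-invariant induction that packages the orbit structure cleanly through the equitable-partition property, but it has to do real work (the cross-edge cancellation) precisely because the difference vector is the less convenient one to iterate. Both are complete proofs.
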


\section{Constructing strongly cospectral vertices}\label{sec:strongly_cospectral_vertices}

In this section we show that the $A$-cospectral vertices of Construction \ref{cons:A_cospectral_constructive} are also strongly cospectral under certain conditions. 
We first require some preparatory results, the first of which is given in the following lemma.

\begin{lemma}\label{lem:eigenvector_same_on_orbit}
Let $G$ be a graph and let $v_i \in V(G)$ be a vertex of $G$. Consider the orbits $[v_j]_{v_i}$ of $G$ under the action of the group $\mathrm{Aut}(G, v_i)$. Consider also the standard basis vector $\boldsymbol{v}_{i}$ associated with the vertex $v_i$ and the spectral decomposition of $\boldsymbol{v}_{i}$ with respect to the eigenvectors $\boldsymbol{w}_\lambda$ of the adjacency matrix $A(G)$:
\[
\boldsymbol{v}_{i} = \sum_{\lambda \in \Lambda_A} c_\lambda \boldsymbol{w}_\lambda.
\]
Then $(\boldsymbol{w}_\lambda)_{v_\ell} = (\boldsymbol{w}_\lambda)_{v_m}$ for all $\lambda \in \Lambda_A$ such that $c_\lambda \neq 0$ and for all vertices $v_\ell$, $v_m$ in the same orbit, that is, for all $v_\ell$, $v_m \in V(G)$ satisfying $v_m \in [v_\ell]_{v_i}$.
\end{lemma}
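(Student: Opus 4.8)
The plan is to exploit the fact that every automorphism of $G$ fixing $v_i$ acts as an orthogonal symmetry of the adjacency matrix, and therefore commutes with its spectral projections and leaves the projection $E_\lambda \boldsymbol{v}_{i}$ invariant. First I would associate to each $\gamma \in \mathrm{Aut}(G, v_i)$ its permutation matrix $P_\gamma$, characterized by $P_\gamma \boldsymbol{v}_{j} = \boldsymbol{v}_{\gamma(j)}$, so that acting on coordinates $(P_\gamma \boldsymbol{x})_{v} = (\boldsymbol{x})_{\gamma^{-1}(v)}$. Since $\gamma$ preserves adjacency, $P_\gamma$ commutes with $A$, i.e.\ $P_\gamma A = A P_\gamma$; and since $\gamma$ fixes $v_i$, we have $P_\gamma \boldsymbol{v}_{i} = \boldsymbol{v}_{i}$.

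The key step is to pass this commutation to the spectral projections. Because each $E_\lambda$ can be written as a polynomial in $A$ (equivalently, because $P_\gamma$ is orthogonal and preserves every eigenspace of $A$), the identity $P_\gamma A = A P_\gamma$ gives $P_\gamma E_\lambda = E_\lambda P_\gamma$ for all $\lambda \in \Lambda_A$. Combining this with $P_\gamma \boldsymbol{v}_{i} = \boldsymbol{v}_{i}$ yields
\[
P_\gamma (E_\lambda \boldsymbol{v}_{i}) = E_\lambda (P_\gamma \boldsymbol{v}_{i}) = E_\lambda \boldsymbol{v}_{i},
\]
so the eigenspace component $E_\lambda \boldsymbol{v}_{i} = c_\lambda \boldsymbol{w}_\lambda$ is invariant under $P_\gamma$ for every $\gamma \in \mathrm{Aut}(G, v_i)$. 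For those $\lambda$ with $c_\lambda \neq 0$ this forces $P_\gamma \boldsymbol{w}_\lambda = \boldsymbol{w}_\lambda$, that is, $(\boldsymbol{w}_\lambda)_{\gamma(v)} = (\boldsymbol{w}_\lambda)_{v}$ for every vertex $v$.

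Finally I would translate invariance into the orbit statement. If $v_\ell$ and $v_m$ lie in the same orbit, then by definition there is some $\gamma \in \mathrm{Aut}(G, v_i)$ with $\gamma(v_\ell) = v_m$, whence $(\boldsymbol{w}_\lambda)_{v_m} = (\boldsymbol{w}_\lambda)_{\gamma(v_\ell)} = (\boldsymbol{w}_\lambda)_{v_\ell}$, which is exactly the claim. The one point requiring care --- the main, if modest, obstacle --- is justifying $P_\gamma E_\lambda = E_\lambda P_\gamma$; I expect to handle it either through the functional-calculus observation that $E_\lambda$ is a polynomial in the symmetric matrix $A$, or directly by noting that an orthogonal $P_\gamma$ commuting with $A$ maps each eigenspace into itself and hence commutes with the orthogonal projector onto it. Everything else is bookkeeping about how permutation matrices act on coordinates, together with the remark that the component equality asserted by the lemma is insensitive to the scaling ambiguity in the choice of $\boldsymbol{w}_\lambda$.
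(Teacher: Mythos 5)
Your argument is correct, but it takes a genuinely different route from the paper's. You work with the group action directly: each $\gamma \in \Aut(G,v_i)$ gives a permutation matrix $P_\gamma$ commuting with $A$ and fixing $\boldsymbol{v}_i$, hence commuting with every spectral projection $E_\lambda$, so that $E_\lambda \boldsymbol{v}_i = c_\lambda \boldsymbol{w}_\lambda$ is $P_\gamma$-invariant and (for $c_\lambda \neq 0$) $\boldsymbol{w}_\lambda$ itself is fixed by the whole group, hence constant on orbits. The paper instead avoids projections and automorphism matrices altogether: it starts from the walk-count identity $(A^k\boldsymbol{v}_i)_{v_\ell} = (A^k\boldsymbol{v}_i)_{v_m}$ for all $k$ (which holds because an automorphism fixing $v_i$ bijects walks $v_i \to v_\ell$ with walks $v_i \to v_m$), expands both sides as $\sum_\lambda \lambda^k c_\lambda (\boldsymbol{w}_\lambda)_v$, and extracts $c_\lambda(\boldsymbol{w}_\lambda)_{v_\ell} = c_\lambda(\boldsymbol{w}_\lambda)_{v_m}$ from the resulting moment identities via the linear independence of the sequences $(\lambda^k)_{k}$ for distinct $\lambda$ (a Vandermonde argument). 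The two proofs hide their work in different places: the paper's opening identity is really your commutation relation $P_\gamma A^k = A^k P_\gamma$ in disguise, and it additionally needs the Vandermonde step, which it states without detail; your version replaces that step with the standard fact that an orthogonal matrix commuting with a symmetric $A$ commutes with its spectral projectors, which you correctly flag as the one point needing justification and for which either of your two suggested arguments works. Your approach is slightly more structural and makes the invariance of $\boldsymbol{w}_\lambda$ under all of $\Aut(G,v_i)$ explicit; the paper's is more elementary in that it never leaves the realm of entries of powers of $A$. Both are complete and correct.
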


Besides being useful later in the paper, the following theorem can be of interest in itself.

\begin{theorem}\label{thm:eigenvalue_of_subgraph_also_of_graph}
Let $G$ and $H$ be two graphs and let $\widetilde{G}$ be a graph constructed from them via Construction \ref{cons:A_cospectral_constructive}, with the fixed vertex $v_c \in V(G)$. 
Consider the vertex $v_c$ in the graph $G$. The standard basis vector $\boldsymbol{v}_{c}$ associated with this vertex has a spectral decomposition with respect to the adjacency matrix $A(G)$ as
\[
\boldsymbol{v}_{c} = \sum_{\lambda \in \Lambda_A} c_\lambda \, \boldsymbol{w}_\lambda.
\]
If $c_\lambda \neq 0$ in this spectral decomposition, then $\lambda$ is also an eigenvalue of $A(\widetilde{G})$. 
\end{theorem}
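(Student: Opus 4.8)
The plan is to produce an explicit eigenvector of $\widetilde{A}$ with eigenvalue $\lambda$ out of the $G$-eigenvector $\boldsymbol{w}_\lambda$. Since $c_\lambda \neq 0$, Lemma \ref{lem:eigenvector_same_on_orbit} guarantees that $\boldsymbol{w}_\lambda$ is constant on each orbit of $\mathrm{Aut}(G, v_c)$, and this orbit-constancy is the single property that makes the argument work. I would then define a vector $\boldsymbol{u}$ on $\widetilde{G}$ by placing $+\boldsymbol{w}_\lambda$ on the copy $G^1$, $-\boldsymbol{w}_\lambda$ on the copy $G^2$, and $0$ on every vertex of $H$; concretely, $(\boldsymbol{u})_{v_m^1} = (\boldsymbol{w}_\lambda)_{v_m}$, $(\boldsymbol{u})_{v_m^2} = -(\boldsymbol{w}_\lambda)_{v_m}$, and $(\boldsymbol{u})_{v'_\alpha} = 0$. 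Because $\boldsymbol{w}_\lambda \neq 0$, this $\boldsymbol{u}$ is nonzero, so it suffices to verify $\widetilde{A}\boldsymbol{u} = \lambda \boldsymbol{u}$ coordinate by coordinate.

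First I would check the coordinates lying in $G^1$ (and, symmetrically, in $G^2$). For $v_m^1$, the neighbours of $v_m^1$ in $\widetilde{G}$ split into its neighbours inside $G^1$ and its neighbours in $H$; the latter contribute nothing since $\boldsymbol{u}$ vanishes on $H$, while the former reproduce $(A(G)\boldsymbol{w}_\lambda)_{v_m} = \lambda (\boldsymbol{w}_\lambda)_{v_m}$. This gives $(\widetilde{A}\boldsymbol{u})_{v_m^1} = \lambda (\boldsymbol{u})_{v_m^1}$, and the same computation with a global sign flip handles $v_m^2$.

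The crux is the coordinate at a vertex $v'_\alpha \in V(H)$, where I must show $(\widetilde{A}\boldsymbol{u})_{v'_\alpha} = 0$. Expanding, the contribution from neighbours inside $H$ vanishes, leaving
\[
(\widetilde{A}\boldsymbol{u})_{v'_\alpha} = \sum_{v_m^1 \sim v'_\alpha} (\boldsymbol{w}_\lambda)_{v_m} - \sum_{v_\ell^2 \sim v'_\alpha} (\boldsymbol{w}_\lambda)_{v_\ell}.
\]
Here the orbit-matching condition of Construction \ref{cons:A_cospectral_constructive} does the work: for each orbit $O$ of $\mathrm{Aut}(G, v_c)$, the vertex $v'_\alpha$ has the same number of neighbours in $O^1 \subseteq V(G^1)$ as in $O^2 \subseteq V(G^2)$, and by the orbit-constancy of $\boldsymbol{w}_\lambda$ every such neighbour contributes the same scalar value. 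Grouping both sums by orbit therefore shows they are equal, so they cancel and yield $(\widetilde{A}\boldsymbol{u})_{v'_\alpha} = 0 = \lambda (\boldsymbol{u})_{v'_\alpha}$.

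This completes the verification that $\boldsymbol{u}$ is a nonzero $\lambda$-eigenvector of $\widetilde{A}$, so $\lambda$ is an eigenvalue of $A(\widetilde{G})$. The only genuine obstacle is the cancellation on $H$, and it is overcome precisely by the hypothesis $c_\lambda \neq 0$, which is exactly what allows Lemma \ref{lem:eigenvector_same_on_orbit} to supply an orbit-constant eigenvector; without that hypothesis the two sums above need not agree and the candidate $\boldsymbol{u}$ would fail to be an eigenvector.
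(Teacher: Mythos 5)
Your proposal is correct and is essentially identical to the paper's own proof: both construct the same candidate eigenvector ($+\boldsymbol{w}_\lambda$ on $G^1$, $-\boldsymbol{w}_\lambda$ on $G^2$, zero on $H$) and verify the eigenvector equation coordinatewise, using Lemma \ref{lem:eigenvector_same_on_orbit} together with the orbit-matching condition of Construction \ref{cons:A_cospectral_constructive} to obtain the cancellation at vertices of $H$. Your explicit remark that the candidate vector is nonzero is a small point the paper leaves implicit.
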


\begin{proof}
We know that the vertices of $\widetilde{G}$ can be partitioned into the vertices of the subgraphs $G^1$, $G^2$ and $H$. Let $\lambda\in \Lambda_A$ be an eigenvalue of $A$ such that $c_{\lambda}\neq 0$ in the spectral decomposition of $\boldsymbol{v}_{c}$. Consider the vector $\widetilde{\boldsymbol{w}}_\lambda \in \mathbb{R}^{|V(\widetilde{G})|}$ associated with the eigenvector $\boldsymbol{w}_\lambda$ of $A(G)$, defined for $v\in V(\widetilde{G})$ by
    \[
(\widetilde{\boldsymbol{w}}_\lambda)_v := 
\begin{cases}
0 & \text{if } v = v'_\alpha \in V(H) \\
(\boldsymbol{w}_\lambda)_{v_k} & \text{if } v = v_k^1 \in V(G^1) \\
-(\boldsymbol{w}_\lambda)_{v_k} & \text{if } v= v_k^2 \in V(G^2)
\end{cases}
\]
We will show that $\widetilde{\boldsymbol{w}}_\lambda$ is an eigenvector of $\widetilde{A}=A(\widetilde{G})$. For vertices $v'_\alpha \in V(H)$ of $\widetilde{G}$, we have
\[
(\widetilde{A} \widetilde{\boldsymbol{w}}_\lambda)_{v'_\alpha} = \sum_{\substack{v_k^1 \sim {v'_\alpha} \\ v_k \in V(G)}} (\widetilde{\boldsymbol{w}}_\lambda)_{v_k^1} + \sum_{\substack{v_\ell^2 \sim {v'_\alpha} \\ v_\ell \in V(G)}} (\widetilde{\boldsymbol{w}}_\lambda)_{v_\ell^2} + \sum_{\substack{{v'_\beta} \sim {v'_\alpha} \\ {v'_\beta}\in V(H)}} (\widetilde{\boldsymbol{w}}_\lambda)_{v'_\beta} 
\]
\[
= \sum_{\substack{v_k^1 \sim {v'_\alpha} \\ v_k \in V(G)}} (\boldsymbol{w}_\lambda)_{v_k} - \sum_{\substack{v_\ell^2 \sim {v'_\alpha} \\ v_\ell \in V(G)}} (\boldsymbol{w}_\lambda)_{v_\ell} 
\]
where the last equality follows from definition of $\widetilde{\boldsymbol{w}}_\lambda$. By Construction \ref{cons:A_cospectral_constructive}, vertices $v'_\alpha \in V(H)$ have equal number of edges connecting it to corresponding orbits of $G^1$ and $G^2$. In other words, for every edge between vertices $v'_\alpha \in V(H)$ and $v_k^1 \in V(G^1)$, there exists a corresponding edge between $v'_\alpha$ and a vertex $v_\ell^2 \in V(G^2)$ such that $v_\ell \in [v_k]_{v_c}$, and vice versa. This fact together with Lemma \ref{lem:eigenvector_same_on_orbit} imply that
\[
(\widetilde{A} \widetilde{\boldsymbol{w}}_\lambda)_{v'_\alpha} = \sum_{\substack{v_k^1 \sim {v'_\alpha} \\ v_k \in V(G)}} (\boldsymbol{w}_\lambda)_{v_k} - \sum_{\substack{v_\ell^2 \sim {v'_\alpha} \\ v_\ell \in V(G)}} (\boldsymbol{w}_\lambda)_{v_\ell} = 0 = \lambda(\widetilde{\boldsymbol{w}}_\lambda)_{v'_\alpha}.
\]
For vertices $v_k^1 \in V(G^1)$ we have
\[
(\widetilde{A} \widetilde{\boldsymbol{w}}_\lambda)_{v_k^1} = \sum_{\substack{v_\ell \sim v_k \\ v_\ell \in V(G)}} (\widetilde{\boldsymbol{w}}_\lambda)_{v_\ell^1} + \sum_{\substack{v'_\alpha \sim v_k^1 \\ v'_\alpha\in V(H)}} (\widetilde{\boldsymbol{w}}_\lambda)_{v'_\alpha} = \sum_{\substack{v_\ell \sim v_k \\ v_\ell \in V(G)}} (\boldsymbol{w}_\lambda)_{v_\ell} 
\]\[
=  \lambda (\boldsymbol{w}_\lambda)_{v_k} = \lambda (\widetilde{\boldsymbol{w}}_\lambda)_{v_k^1}
\]
where the second and fourth equalities are by the definition of $\widetilde{\boldsymbol{w}}_{\lambda}$ and the third equality is due to $\boldsymbol{w}_{\lambda}$ being an eigenvector of $A(G)$ with eigenvalue $\lambda$. A similar argument works for vertices of $G^2$. Hence, 
\[
\widetilde{A} \widetilde{\boldsymbol{w}}_\lambda = \lambda \widetilde{\boldsymbol{w}}_\lambda
\]
and so $\lambda$ is an eigenvalue of $\widetilde{A}$, as claimed.  
\end{proof}

\begin{definition}

We call the eigenvalues of $\widetilde{A}$ considered in Theorem \ref{thm:eigenvalue_of_subgraph_also_of_graph} the \textit{eigenvalues of $\widetilde{A}$ induced by $G$} and denote the set of such eigenvalues by $\Lambda_{G}^{\widetilde{A}}$.
The normalized versions of the eigenvectors $\widetilde{\boldsymbol{w}}_\lambda$ constructed in the proof of Theorem \ref{thm:eigenvalue_of_subgraph_also_of_graph} are similarly called \textit{eigenvectors of $\widetilde{A}$ induced by $G$}.   
    
\end{definition}

\begin{lemma}\label{lem:v1-v2_contains_only_induced_eigenvectors}
Let $G$ and $H$ be two graphs. Let $\widetilde{G}$ be a graph constructed as in Construction \ref{cons:A_cospectral_constructive}, with the fixed vertex $v_c \in V(G)$ and subgraphs $G^1$, $G^2$, $H$. Consider the adjacency matrix $\widetilde{A}$, its eigenvectors, and the vector $\boldsymbol{v}_{c}^1 - \boldsymbol{v}_{c}^2$. The spectral decomposition 
    \[
    \boldsymbol{v}_{c}^1 - \boldsymbol{v}_{c}^2 = \sum_{\lambda \in \Lambda_{\widetilde{A}}} c_{\lambda} \boldsymbol{y}_\lambda
    \]
of $\boldsymbol{v}_{c}^1 - \boldsymbol{v}_{c}^2$ contains precisely those eigenvectors of $\widetilde{A}$ that are induced by $G$. That is, if $c_{\lambda} \neq 0$, then $\lambda$ is an eigenvalue of $\widetilde{A}$ induced by $G$ and the eigenvector $\boldsymbol{y}_\lambda$ equals the eigenvector $ \widetilde{\boldsymbol{w}}_{\lambda}$ of $\widetilde{A}$ induced by $G$.
\end{lemma}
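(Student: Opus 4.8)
\textbf{Proof plan for Lemma \ref{lem:v1-v2_contains_only_induced_eigenvectors}.}

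The plan is to show a two-sided containment at the level of the spectral decomposition: every eigenvector appearing with nonzero coefficient is induced by $G$, and conversely the decomposition is exactly the induced family. The guiding principle is that the two vectors $\boldsymbol{v}_{c}^1 - \boldsymbol{v}_{c}^2$ and the induced eigenvectors $\widetilde{\boldsymbol{w}}_\lambda$ both live in the same distinguished subspace of $\mathbb{R}^{|V(\widetilde{G})|}$, namely the subspace of vectors that vanish on $V(H)$ and are antisymmetric across the two copies $G^1, G^2$. First I would make this subspace explicit: let $W$ denote the set of vectors $\boldsymbol{x}$ with $(\boldsymbol{x})_{v'_\alpha}=0$ for all $v'_\alpha \in V(H)$, with $(\boldsymbol{x})_{v_k^1} = -(\boldsymbol{x})_{v_k^2}$ for all $v_k \in V(G)$, and with $(\boldsymbol{x})_{v_m} = (\boldsymbol{x})_{v_\ell}$ whenever $v_\ell \in [v_m]_{v_c}$ (constant on orbits). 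By Theorem \ref{thm:A_cospectral_constructive}, claims \ref{thm1:claim1}, \ref{thm1:claim2}, \ref{thm1:claim3}, the entire Krylov trajectory $\widetilde{A}^k(\boldsymbol{v}_{c}^1 - \boldsymbol{v}_{c}^2)$ lies in $W$, so $\langle \boldsymbol{v}_{c}^1 - \boldsymbol{v}_{c}^2\rangle_{\widetilde{A}} \subseteq W$, and each $\widetilde{\boldsymbol{w}}_\lambda$ constructed in Theorem \ref{thm:eigenvalue_of_subgraph_also_of_graph} lies in $W$ as well (by its defining formula together with Lemma \ref{lem:eigenvector_same_on_orbit}, which gives the orbit-constancy).

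Next I would argue the inclusion ``only induced eigenvectors appear''. Since $\widetilde{A}$ is symmetric, $\boldsymbol{v}_{c}^1 - \boldsymbol{v}_{c}^2$ lies in its own Krylov space $\langle \boldsymbol{v}_{c}^1 - \boldsymbol{v}_{c}^2\rangle_{\widetilde{A}}$, and this cyclic subspace is spanned precisely by the eigenvector components $\boldsymbol{y}_\lambda$ that occur with $c_\lambda \neq 0$; hence each such $\boldsymbol{y}_\lambda$ lies in $W$. The key point is then that an eigenvector of $\widetilde{A}$ lying in $W$ must be induced by $G$: if $\widetilde{A}\boldsymbol{y} = \lambda \boldsymbol{y}$ and $\boldsymbol{y}\in W$, then writing the eigenvalue equation restricted to the $G^1$ vertices and using that $\boldsymbol{y}$ vanishes on $V(H)$, the edges from $G^1$ to $H$ contribute nothing, so the restriction $\boldsymbol{y}|_{G^1}$ satisfies the eigenvalue equation $A(G)\,\boldsymbol{y}|_{G^1} = \lambda\,\boldsymbol{y}|_{G^1}$ for the adjacency matrix of a single copy of $G$. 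Thus $\boldsymbol{y}|_{G^1}$ is a (possibly zero) $\lambda$-eigenvector $\boldsymbol{w}_\lambda$ of $A(G)$, the $G^2$ part is its negative by antisymmetry, and the $H$ part is zero, which is exactly the defining formula of $\widetilde{\boldsymbol{w}}_\lambda$; so $\boldsymbol{y} = \widetilde{\boldsymbol{w}}_\lambda$ up to scaling and $\lambda \in \Lambda_G^{\widetilde{A}}$. This simultaneously identifies $\boldsymbol{y}_\lambda$ with the induced eigenvector, giving the second assertion of the lemma.

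Finally I would confirm the converse, that the decomposition contains \emph{precisely} the induced eigenvectors and not merely a subset, by checking that each induced $\lambda$ with $c_\lambda \neq 0$ (in the decomposition of $\boldsymbol{v}_c$ over $A(G)$) genuinely contributes. This follows by computing the coefficient $c_\lambda$ of $\boldsymbol{y}_\lambda = \widetilde{\boldsymbol{w}}_\lambda$ as the inner product $\langle \boldsymbol{v}_{c}^1 - \boldsymbol{v}_{c}^2, \widetilde{\boldsymbol{w}}_\lambda\rangle$ (after normalization): using the defining formula of $\widetilde{\boldsymbol{w}}_\lambda$ at the coordinates $v_c^1$ and $v_c^2$, this inner product equals $(\boldsymbol{w}_\lambda)_{v_c} - (-(\boldsymbol{w}_\lambda)_{v_c}) = 2(\boldsymbol{w}_\lambda)_{v_c}$, which is nonzero exactly when the coefficient of $\boldsymbol{w}_\lambda$ in $\boldsymbol{v}_c$ is nonzero. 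The main obstacle I anticipate is the careful bookkeeping in the step ``an eigenvector in $W$ restricts to a $G$-eigenvector'': one must verify that the vanishing on $V(H)$ exactly kills the cross edges in the eigenvalue equation, so that the restricted equation really is governed by $A(G)$ alone and not by the extra adjacencies introduced in Construction \ref{cons:A_cospectral_constructive}. Once that restriction identity is pinned down, the rest is a direct matching of the three coordinate blocks against the definition of the induced eigenvector.
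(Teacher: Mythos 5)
There is a genuine gap at the pivotal step of your second paragraph: the claim that an eigenvector of $\widetilde{A}$ lying in $W$ must be the induced eigenvector $\widetilde{\boldsymbol{w}}_\lambda$. What your restriction argument actually proves is that such an eigenvector $\boldsymbol{y}$ has the block form (zero on $V(H)$; $\boldsymbol{u}$ on $G^1$; $-\boldsymbol{u}$ on $G^2$) where $\boldsymbol{u}$ is \emph{some} orbit-constant $\lambda$-eigenvector of $A(G)$. That does not identify $\boldsymbol{u}$ with the specific vector $\boldsymbol{w}_\lambda$ appearing in the spectral decomposition of $\boldsymbol{v}_{c}$, i.e.\ with a multiple of $E_\lambda\boldsymbol{v}_{c}$. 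The orbit-constant part of the $\lambda$-eigenspace of $A(G)$ can have dimension greater than one (take, for instance, $G$ with $\mathrm{Aut}(G,v_c)$ trivial, so that orbit-constancy is vacuous, and with a repeated eigenvalue of $A(G)$), and it can even happen that $\lambda$ is an eigenvalue of $A(G)$ with $c_\lambda = 0$ in the decomposition of $\boldsymbol{v}_{c}$, in which case $\lambda$ need not lie in $\Lambda_{G}^{\widetilde{A}}$ at all and there is no $\widetilde{\boldsymbol{w}}_\lambda$ for $\boldsymbol{y}$ to equal. In all such situations the antisymmetric extension of \emph{any} orbit-constant $\lambda$-eigenvector of $A(G)$ is an eigenvector of $\widetilde{A}$ lying in your subspace $W$, so membership in $W$ alone cannot deliver the conclusion ``so $\boldsymbol{y} = \widetilde{\boldsymbol{w}}_\lambda$ up to scaling and $\lambda \in \Lambda_G^{\widetilde{A}}$.'' Your closing inner-product computation is correct and shows that every induced eigenvalue does appear, but it does not exclude an extra component of $E_\lambda(\boldsymbol{v}_{c}^1-\boldsymbol{v}_{c}^2)$ orthogonal to $\widetilde{\boldsymbol{w}}_\lambda$ inside a higher-dimensional eigenspace, which is precisely the assertion at stake.

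The gap is repairable with one further observation for which you already have the ingredients: since $\widetilde{A}^k(\boldsymbol{v}_{c}^1-\boldsymbol{v}_{c}^2)$ vanishes on $V(H)$ for every $k$, the same restriction computation gives $(\widetilde{A}^k(\boldsymbol{v}_{c}^1-\boldsymbol{v}_{c}^2))\big|_{G^1} = A(G)^k\boldsymbol{v}_{c}$, so $\boldsymbol{y}_\lambda\big|_{G^1}$ lies in the cyclic subspace $\langle \boldsymbol{v}_{c}\rangle_{A(G)}$; the only $\lambda$-eigenvectors of $A(G)$ in that subspace are the scalar multiples of $E_\lambda\boldsymbol{v}_{c} = c_\lambda\boldsymbol{w}_\lambda$, which forces $c_\lambda\neq 0$ and $\boldsymbol{y}_\lambda = \widetilde{\boldsymbol{w}}_\lambda$. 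The paper sidesteps this entirely by arguing in the opposite direction: it writes down the explicit candidate $\sum_{\lambda} c_\lambda\,\widetilde{\boldsymbol{w}}_\lambda$ (coefficients taken from the decomposition of $\boldsymbol{v}_{c}$ over $A(G)$), verifies coordinate-by-coordinate on $V(H)$, $V(G^1)$, $V(G^2)$ that this sum equals $\boldsymbol{v}_{c}^1-\boldsymbol{v}_{c}^2$, and then invokes uniqueness of the spectral decomposition, the summands being eigenvectors of $\widetilde{A}$ for distinct eigenvalues by Theorem \ref{thm:eigenvalue_of_subgraph_also_of_graph}. That route requires no classification of eigenvectors in $W$ at all, and I would recommend adopting it or adding the Krylov-restriction identity above to close your argument.
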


The next result shows that one only needs to check the eigenvalues of $\widetilde{A}$ induced by $G$ to ensure strong cospectrality of the $A$-cospectral vertices obtained by Construction \ref{cons:A_cospectral_constructive}.

\begin{theorem}\label{thm:almost_strongly_cospectral}
Let $G$ and $H$ be two graphs and let $\widetilde{G}$ be a graph obtained from them via Construction \ref{cons:A_cospectral_constructive}, with the fixed vertex $v_c \in V(G)$.
Suppose that eigenvalues of $\widetilde{A}$ induced by $G$ are all simple eigenvalues of $\widetilde{A}$. Then the vertices $v_c^1$ and $v_c^2$ are strongly cospectral.
\end{theorem}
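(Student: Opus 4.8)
The plan is to verify the defining condition of strong cospectrality directly, namely that $E_\lambda \boldsymbol{v}_c^1 = \pm E_\lambda \boldsymbol{v}_c^2$ for every eigenvalue $\lambda$ of $\widetilde{A}$, splitting into two cases according to whether $\lambda$ is induced by $G$. The whole argument rests on combining Lemma \ref{lem:v1-v2_contains_only_induced_eigenvectors}, which controls the spectral support of $\boldsymbol{v}_c^1 - \boldsymbol{v}_c^2$, with the simplicity hypothesis, which pins down the eigenspaces at the induced eigenvalues.

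First I would treat the eigenvalues $\lambda$ that are \emph{not} induced by $G$. By Lemma \ref{lem:v1-v2_contains_only_induced_eigenvectors}, the spectral decomposition of $\boldsymbol{v}_c^1 - \boldsymbol{v}_c^2$ involves only eigenvectors of $\widetilde{A}$ induced by $G$, so $E_\lambda(\boldsymbol{v}_c^1 - \boldsymbol{v}_c^2) = 0$, i.e. $E_\lambda \boldsymbol{v}_c^1 = E_\lambda \boldsymbol{v}_c^2$. This already yields the $+$ case for every non-induced eigenvalue, and it uses none of the simplicity hypothesis.

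Next I would treat the case where $\lambda$ is induced by $G$. Here the simplicity hypothesis enters: since $\lambda$ is a simple eigenvalue of $\widetilde{A}$, its eigenspace is one-dimensional, spanned by the normalized induced eigenvector $\widetilde{\boldsymbol{w}}_\lambda$, so $E_\lambda = \widetilde{\boldsymbol{w}}_\lambda \widetilde{\boldsymbol{w}}_\lambda^{T}$. Projecting the two standard basis vectors gives $E_\lambda \boldsymbol{v}_c^1 = (\widetilde{\boldsymbol{w}}_\lambda)_{v_c^1}\,\widetilde{\boldsymbol{w}}_\lambda$ and $E_\lambda \boldsymbol{v}_c^2 = (\widetilde{\boldsymbol{w}}_\lambda)_{v_c^2}\,\widetilde{\boldsymbol{w}}_\lambda$. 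By the explicit definition of the induced eigenvector from the proof of Theorem \ref{thm:eigenvalue_of_subgraph_also_of_graph}, one has $(\widetilde{\boldsymbol{w}}_\lambda)_{v_c^1} = (\boldsymbol{w}_\lambda)_{v_c}$ and $(\widetilde{\boldsymbol{w}}_\lambda)_{v_c^2} = -(\boldsymbol{w}_\lambda)_{v_c}$, whence $E_\lambda \boldsymbol{v}_c^1 = -E_\lambda \boldsymbol{v}_c^2$, the $-$ case. Combining the two cases, $E_\lambda \boldsymbol{v}_c^1 = \pm E_\lambda \boldsymbol{v}_c^2$ holds for all $\lambda \in \Lambda_{\widetilde{A}}$, which is exactly the definition of strong cospectrality.

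The step I expect to carry the real weight is the induced case, and in particular the appeal to simplicity. Lemma \ref{lem:v1-v2_contains_only_induced_eigenvectors} tells us that the difference vector lies in the span of the induced eigenvectors, but by itself it does not control how $\boldsymbol{v}_c^1$ and $\boldsymbol{v}_c^2$ individually project onto a possibly multi-dimensional eigenspace. Simplicity is precisely what collapses each induced eigenspace to the line through $\widetilde{\boldsymbol{w}}_\lambda$, forcing both $E_\lambda \boldsymbol{v}_c^1$ and $E_\lambda \boldsymbol{v}_c^2$ to be scalar multiples of the same vector, so that the sign relation becomes automatic; without it, the two projections could point in independent directions and the $\pm$ structure would break. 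I would also remark that, since $\lambda$ being induced forces $(\boldsymbol{w}_\lambda)_{v_c} \neq 0$, the projections in the induced case are genuinely nonzero, although the sign relation holds regardless.
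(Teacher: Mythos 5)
Your proposal is correct and follows essentially the same route as the paper's proof: both arguments rest on Lemma \ref{lem:v1-v2_contains_only_induced_eigenvectors} to get $E_\lambda \boldsymbol{v}_c^1 = E_\lambda \boldsymbol{v}_c^2$ at non-induced eigenvalues, and on the simplicity hypothesis together with the explicit sign structure of $\widetilde{\boldsymbol{w}}_\lambda$ to get $E_\lambda \boldsymbol{v}_c^1 = -E_\lambda \boldsymbol{v}_c^2$ at induced ones. The only difference is presentational: you work directly with the rank-one projections $E_\lambda = \widetilde{\boldsymbol{w}}_\lambda \widetilde{\boldsymbol{w}}_\lambda^{T}$, whereas the paper writes out full spectral decompositions of $\boldsymbol{v}_c^1$ and $\boldsymbol{v}_c^2$ and compares coefficients, which amounts to the same computation.
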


\begin{proof}
Consider the $A$-cospectral vertices $v_c^1$ and $v_c^2$. Now, with respect to the adjacency matrix $\widetilde{A}=A(\widetilde{G})$, $\boldsymbol{v}_{c}^1$ and $\boldsymbol{v}_{c}^2$ have the spectral decompositions
\begin{equation}\label{eq:strongly_cospectral_1}
    \boldsymbol{v}_{c}^1 = \sum_{\lambda \in \Lambda_{G}^{\widetilde{A}}} \alpha_\lambda^1 \, \widetilde{\boldsymbol{w}}_\lambda + \sum_{\lambda \in X} \beta_\lambda^1 \, \boldsymbol{y}_\lambda^1 \quad \text{and} \quad \boldsymbol{v}_{c}^2 = \sum_{\lambda \in \Lambda_{G}^{\widetilde{A}}} \alpha_\lambda^2 \, \widetilde{\boldsymbol{w}}_\lambda + \sum_{\lambda \in X} \beta_\lambda^2 \, \boldsymbol{y}_\lambda^2
\end{equation}
where $\widetilde{\boldsymbol{w}}_\lambda$ are the eigenvectors of $\widetilde{A}$ induced by $G$; $\boldsymbol{y}_\lambda^1$, $\boldsymbol{y}_\lambda^2$ denote other eigenvectors of $\widetilde{A}$ and  
\[
X := \left( \Lambda_{\widetilde{A}} \setminus \Lambda_{G}^{\widetilde{A}} \right) \cup \left\{ \lambda \in \Lambda_{G}^{\widetilde{A}} \mid \mu_{\widetilde{A}}(\lambda) > 1 \right\}
\]
denotes the set of eigenvalues of $\widetilde{A}$ not induced by $G$. Here $\mu_{\widetilde{A}}(\lambda)$ is the multiplicity of $\lambda$ as an eigenvalue of $\widetilde{A}$. Note that $X$ potentially includes some of the eigenvalues of $\widetilde{A}$ induced by $G$ if they have multiplicity $\mu_{\widetilde{A}}(\lambda) > 1$ as eigenvalues of $\widetilde{A}$. If this is the case, then in (\ref{eq:strongly_cospectral_1}) the eigenvectors $\boldsymbol{y}_{\lambda}^1$ and $\boldsymbol{y}_{\lambda}^2$ corresponding to $\lambda$ are chosen such that they are orthogonal to the eigenvector $\widetilde{\boldsymbol{w}}_{\lambda}$.

Now, the vector $\boldsymbol{v}_{c}^1 - \boldsymbol{v}_{c}^2$ also has a spectral decomposition, and by Lemma \ref{lem:v1-v2_contains_only_induced_eigenvectors} it should contain only eigenvectors $\widetilde{\boldsymbol{w}}_\lambda$ of $\widetilde{A}$ induced by $G$, i.e., it has the form
\[
\boldsymbol{v}_{c}^1 - \boldsymbol{v}_{c}^2 = \sum_{\lambda \in \Lambda_{G}^{\widetilde{A}}} (\alpha_\lambda^1 - \alpha_\lambda^2) \, \widetilde{\boldsymbol{w}}_\lambda.
\]
By (\ref{eq:strongly_cospectral_1}), this directly implies
\[
\beta_\lambda := \beta_\lambda^1 = \beta_\lambda^2 \quad \text{and} \quad \boldsymbol{y}_\lambda := \boldsymbol{y}_\lambda^1 = \boldsymbol{y}_\lambda^2, \quad  \forall \lambda \in X.
\]
(The latter was not obvious as $\boldsymbol{y}_\lambda^1$ and $\boldsymbol{y}_\lambda^2$ could have been different eigenvectors of the same eigenvalue if the eigenspace of $\lambda$ is more than one dimensional.) Moreover,  
\[
\alpha_\lambda := \alpha_\lambda^1 = \widetilde{\boldsymbol{w}}_\lambda^T \boldsymbol{v}_{c}^1 = -\widetilde{\boldsymbol{w}}_\lambda^T \boldsymbol{v}_{c}^2 = -\alpha_\lambda^2, 
\quad \forall\lambda \in \Lambda_{G}^{\widetilde{A}},
\]
where the equalities follow from (\ref{eq:strongly_cospectral_1}) and the definition of $\widetilde{\boldsymbol{w}}_\lambda$, scaled to be a unit vector. Now, for  $\lambda \in \Lambda_{\widetilde{A}}$, let $E_\lambda$ be the orthogonal projection map onto the eigenspace of $\widetilde{A}$ corresponding to $\lambda$. Then, 
\begin{alignat*}{2}
E_\lambda \boldsymbol{v}_{c}^1 &= E_\lambda \boldsymbol{v}_{c}^2 = \beta_\lambda \boldsymbol{y}_\lambda  &\quad  &\text{if } \lambda \in X \text{ and } \lambda \notin \Lambda_{G}^{\widetilde{A}},
\\
E_\lambda \boldsymbol{v}_{c}^1 &= -E_\lambda \boldsymbol{v}_{c}^2 = \alpha_\lambda \widetilde{\boldsymbol{w}}_\lambda  & \quad & \text{if } \lambda \notin X \text{ and } \lambda \in \Lambda_{G}^{\widetilde{A}}.
\end{alignat*}
This implies that if \( X \cap \Lambda_{G}^{\widetilde{A}} = \emptyset \), i.e., if the eigenvalues of $\widetilde{A}$ induced by $G$ are all simple eigenvalues of $\widetilde{A}$, then $v_c^1$ and $v_c^2$ are strongly cospectral in $\widetilde{G}$.  
\end{proof}

\begin{remark}\label{rem:strongly_cospectral}

Theorem \ref{thm:almost_strongly_cospectral} fails to work if there exists $\lambda \in X  \cap \Lambda_{G}^{\widetilde{A}}$, or equivalently if there exists $\lambda \in \Lambda_G^{\widetilde{A}}$ which has multiplicity $\mu_{\widetilde{A}}(\lambda) > 1$ as an eigenvalue of $\widetilde{A}$. In such a case, we have $E_\lambda \boldsymbol{v}_{c}^1 = \beta_\lambda \boldsymbol{y}_\lambda + \alpha_\lambda \widetilde{\boldsymbol{w}}_\lambda $ and $E_\lambda \boldsymbol{v}_{c}^2 = \beta_\lambda \boldsymbol{y}_\lambda - \alpha_\lambda \widetilde{\boldsymbol{w}}_\lambda$ and so these vectors, in general, become linearly independent. However, since Construction \ref{cons:A_cospectral_constructive} is preserved under modifications of the subgraph $H$, one might be able to get rid of such eigenvalue degeneracies while preserving the cospectrality of $v_c^1$ and $v_c^2$. An operation that might help such modifications is given in Lemma \ref{lem:reduce_eigval_multiplicity}.
    
\end{remark}

\begin{lemma}\label{lem:reduce_eigval_multiplicity}
    
Let $G$ be a graph such that $\lambda$ is an eigenvalue of $A(G)$ of multiplicity $\mu_{A}(\lambda) = \ell > 1$. Let $\boldsymbol{y}_\lambda$ be an eigenvector of $A(G)$ associated with $\lambda$. We have $(\boldsymbol{y}_\lambda)_{v_m} \neq 0$ for some $v_m \in V(G)$. Let $\widehat{G}$ be the graph obtained from $G$ by attaching a single vertex $\hat{v}$ to $v_m$. Then, the multiplicity $\mu_{\widehat{A}}(\lambda)$ of $\lambda$ as an eigenvalue of $A(\widehat{G})$ equals $\ell - 1$.
\end{lemma}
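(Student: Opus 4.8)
The plan is to realize the adjacency matrix of $\widehat{G}$ in block form and read off the $\lambda$-eigenspace directly. Write $n = |V(G)|$ and let $\boldsymbol{v}_m \in \mathbb{R}^n$ be the standard basis vector of the attachment vertex $v_m$. Since the new vertex $\hat{v}$ is adjacent only to $v_m$,
\[
\widehat{A} = \begin{pmatrix} A & \boldsymbol{v}_m \\ \boldsymbol{v}_m^T & 0 \end{pmatrix}.
\]
I would analyze a candidate eigenvector $(\boldsymbol{u}, a)^T$, with $\boldsymbol{u} \in \mathbb{R}^n$ and $a \in \mathbb{R}$ the coordinate at $\hat{v}$, by expanding $\widehat{A}(\boldsymbol{u}, a)^T = \lambda (\boldsymbol{u}, a)^T$ into the two relations $(A - \lambda I)\boldsymbol{u} = -a\,\boldsymbol{v}_m$ and $(\boldsymbol{u})_{v_m} = \lambda a$. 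Everything in the proof comes from these two equations.

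For the lower bound I would exhibit $\ell - 1$ independent eigenvectors explicitly. Set $W_\lambda = \ker(A - \lambda I)$, of dimension $\ell$, and let $W_\lambda^0 = \{\boldsymbol{w} \in W_\lambda : (\boldsymbol{w})_{v_m} = 0\}$ be the kernel of the coordinate functional $\boldsymbol{w} \mapsto (\boldsymbol{w})_{v_m}$ restricted to $W_\lambda$. The hypothesis that $(\boldsymbol{y}_\lambda)_{v_m} \neq 0$ for some eigenvector $\boldsymbol{y}_\lambda \in W_\lambda$ makes this functional nonzero, so $\dim W_\lambda^0 = \ell - 1$. A one-line check shows that for each $\boldsymbol{w} \in W_\lambda^0$ the padded vector $(\boldsymbol{w}, 0)^T$ is a $\lambda$-eigenvector of $\widehat{A}$, since the appended coordinate of its image is exactly $(\boldsymbol{w})_{v_m} = 0$. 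This gives $\mu_{\widehat{A}}(\lambda) \geq \ell - 1$.

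The upper bound carries the real content, and I expect the symmetry argument there to be the main obstacle. The key point is that an eigenvector with $a \neq 0$ would force $\boldsymbol{v}_m \in \mathrm{range}(A - \lambda I)$ via the first relation. Because $A$ is symmetric, $\mathrm{range}(A - \lambda I) = \ker(A - \lambda I)^{\perp} = W_\lambda^{\perp}$; but $\langle \boldsymbol{v}_m, \boldsymbol{y}_\lambda \rangle = (\boldsymbol{y}_\lambda)_{v_m} \neq 0$ shows $\boldsymbol{v}_m \notin W_\lambda^{\perp}$, a contradiction. Hence every $\lambda$-eigenvector of $\widehat{A}$ has $a = 0$, and the two relations then force $\boldsymbol{u} \in W_\lambda$ together with $(\boldsymbol{u})_{v_m} = 0$, i.e. $\boldsymbol{u} \in W_\lambda^0$. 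Thus the $\lambda$-eigenspace of $\widehat{A}$ is precisely $\{(\boldsymbol{w}, 0)^T : \boldsymbol{w} \in W_\lambda^0\}$, of dimension $\ell - 1$. Combining the two bounds yields $\mu_{\widehat{A}}(\lambda) = \ell - 1$, as claimed. The only subtlety to flag is that the hypothesis $(\boldsymbol{y}_\lambda)_{v_m} \neq 0$ is used twice and for opposite purposes: it is what drops the dimension by one in the lower bound, and it is also exactly what blocks the range membership in the upper bound, so the same condition simultaneously guarantees the decrease is by one and no more.
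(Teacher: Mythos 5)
Your proof is correct, but it takes a genuinely different route from the paper. The paper follows the interlacing argument of Godsil--Royle (Theorem 9.5.1): it sets up the deletion matrix $R$, invokes Cauchy interlacing for the eigenvalues of $A$ inside those of $\widehat{A}$, and then shows via Rayleigh quotients that the relevant interlacing inequalities $\lambda_j(\widehat{A}) \geq \lambda_j(A)$ and $\lambda_{j+\ell-1}(A) \geq \lambda_{j+\ell}(\widehat{A})$ are in fact strict, precisely because equality would produce a $\lambda$-eigenvector of $\widehat{A}$ vanishing at $\hat{v}$ but with nonzero image there --- the same contradiction you reach more directly. Your argument instead reads the $\lambda$-eigenspace of $\widehat{A}$ straight off the block form: the two relations $(A-\lambda I)\boldsymbol{u} = -a\,\boldsymbol{v}_m$ and $(\boldsymbol{u})_{v_m} = \lambda a$, together with $\mathrm{range}(A-\lambda I) = \ker(A-\lambda I)^{\perp}$ for symmetric $A$ and the hypothesis $\langle \boldsymbol{v}_m, \boldsymbol{y}_\lambda\rangle \neq 0$, pin the eigenspace down as exactly $\{(\boldsymbol{w},0)^T : \boldsymbol{w} \in W_\lambda^0\}$. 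This is more elementary (no interlacing machinery, no min--max), gives both bounds in one stroke, and has the bonus of an explicit description of the surviving eigenvectors; the paper's approach buys additional information about how the remaining eigenvalues of $\widehat{A}$ are positioned relative to those of $A$, which is not needed for the lemma. Your observation that the hypothesis $(\boldsymbol{y}_\lambda)_{v_m}\neq 0$ does double duty --- cutting the dimension by exactly one from below and blocking the range membership from above --- is exactly the right way to see why the multiplicity drops by precisely one. No gaps.
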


We defer the remainder of the proofs to the Appendix.

\subsubsection*{Acknowledgment}

O.E.E. thanks TÜBİTAK for the scholarship support (BİDEB 2211-Domestic Graduate Scholarship Programme) given for this work.

\appendix
\section{Proofs}

\begin{proof}[Proof of Lemma \ref{lem:modify_cospectral_construction}]
In the proof of Theorem \ref{thm:A_cospectral_constructive} we laid three separate claims and proved them together inductively. It is easy to see that the base case for all three claims are unaffected by the operation of connecting orbits. It is also easy to see that the inductive step for claim \ref{thm1:claim1} is also unaffected.
The inductive steps of claims \ref{thm1:claim2} and \ref{thm1:claim3} also continue to hold; since in the induction step, by the definition of the operation of connecting orbits and using the induction hypothesis for claims \ref{thm1:claim2} and \ref{thm1:claim3}, the newly added edges contribute the same term to both sides of the equalities in claims \ref{thm1:claim2} and \ref{thm1:claim3}. Thus, the proof of Theorem \ref{thm:A_cospectral_constructive} applies and hence, $v_c^1$ and $v_c^2$ are $A$-cospectral in $\widehat{G}$.  
\end{proof}

\begin{proof}[Proof of Theorem \ref{thm:L_cospectral_constructive}]

We claim that the following hold for all $k \in \mathbb{N}$:

    \begin{enumerate}[label=(\roman*)]
        \item\label{thm2:claim1}  $\left( \widetilde{L}^k (\boldsymbol{v}_{c}^1 + \boldsymbol{v}_{c}^2) \right)_{v_{m}^1}  
= \left( \widetilde{L}^k (\boldsymbol{v}_{c}^1 + \boldsymbol{v}_{c}^2) \right)_{v_{m}^2}$,
        \item\label{thm2:claim2}  $\left( \widetilde{L}^k (\boldsymbol{v}_{c}^1 + \boldsymbol{v}_{c}^2) \right)_{v_{m}^1}  
= \left( \widetilde{L}^k (\boldsymbol{v}_{c}^1 + \boldsymbol{v}_{c}^2) \right)_{v_{j}^1}$, \( \forall v_m, v_j \in V(G) \) such that \mbox{\( v_j \in [v_m]_{v_c}  \)}.
    \end{enumerate}

Clearly, claims \ref{thm2:claim1} and \ref{thm2:claim2} together imply $( \widetilde{L}^k (\boldsymbol{v}_{c}^1 + \boldsymbol{v}_{c}^2) )_{v_{m}^2}  
= ( \widetilde{L}^k (\boldsymbol{v}_{c}^1 + \boldsymbol{v}_{c}^2) )_{v_{j}^2}$ as well. We will prove by induction.

\textit{Base Case \( k = 0 \)}: Trivial.

\textit{Inductive Step}: Suppose claims \ref{thm2:claim1} and \ref{thm2:claim2} hold for $k - 1$. We will show they also hold for $k$. To prove claim \ref{thm2:claim1}, we have for any $v_m \in V(G)$,  
\begin{align*}
\left( \widetilde{L}^k (\boldsymbol{v}_{c}^1 + \boldsymbol{v}_{c}^2) \right)_{v_{m}^1} &= d_{v_m} \left( \widetilde{L}^{k-1} (\boldsymbol{v}_{c}^1 + \boldsymbol{v}_{c}^2) \right)_{v_{m}^1}  
- \sum\limits_{v_j \sim v_m} \left( \widetilde{L}^{k-1} (\boldsymbol{v}_{c}^1 + \boldsymbol{v}_{c}^2) \right)_{v_{j}^1}
\\
& \hspace{1cm} + \sum\limits_{\substack{v_\ell \in V(G) \\ v_{\ell}^2 \sim v_{m}^1}}  
\left[ \left( \widetilde{L}^{k-1} (\boldsymbol{v}_{c}^1 + \boldsymbol{v}_{c}^2) \right)_{v_{m}^1}  
- \left( \widetilde{L}^{k-1} (\boldsymbol{v}_{c}^1 + \boldsymbol{v}_{c}^2) \right)_{v_{\ell}^2} \right]
\\
&= d_{v_m} \left( \widetilde{L}^{k-1} (\boldsymbol{v}_{c}^1 + \boldsymbol{v}_{c}^2) \right)_{v_{m}^2}  
- \sum\limits_{v_j \sim v_m} \left( \widetilde{L}^{k-1} (\boldsymbol{v}_{c}^1 + \boldsymbol{v}_{c}^2) \right)_{v_{j}^2} 
\end{align*}
where we used the induction hypothesis for claims \ref{thm2:claim1} and \ref{thm2:claim2} along with the construction fact that if $v_{\ell}^2 \sim v_{m}^1$ then $v_\ell \in [v_m]_{v_c}$. In the same way, we obtain 
\begin{align*}
& d_{v_m} \left( \widetilde{L}^{k-1} (\boldsymbol{v}_{c}^1 + \boldsymbol{v}_{c}^2) \right)_{v_{m}^2}  
- \sum\limits_{v_j \sim v_m} \left( \widetilde{L}^{k-1} (\boldsymbol{v}_{c}^1 + \boldsymbol{v}_{c}^2) \right)_{v_{j}^2}
\\
& = d_{v_m} \left( \widetilde{L}^{k-1} (\boldsymbol{v}_{c}^1 + \boldsymbol{v}_{c}^2) \right)_{v_{m}^2}  
- \sum\limits_{v_j \sim v_m} \left( \widetilde{L}^{k-1} (\boldsymbol{v}_{c}^1 + \boldsymbol{v}_{c}^2) \right)_{v_{j}^2}
\\
& \hspace{1cm} + \sum\limits_{\substack{v_\ell \in V(G) \\ v_{\ell}^1 \sim v_{m}^2}}\left[ \left( \widetilde{L}^{k-1} (\boldsymbol{v}_{c}^1 + \boldsymbol{v}_{c}^2) \right)_{v_{m}^2}  
- \left( \widetilde{L}^{k-1} (\boldsymbol{v}_{c}^1 + \boldsymbol{v}_{c}^2) \right)_{v_{\ell}^1} \right] 
\\
&= \left( \widetilde{L}^k (\boldsymbol{v}_{c}^1 + \boldsymbol{v}_{c}^2) \right)_{v_{m}^2}
\end{align*}
proving claim \ref{thm2:claim1}. Now to prove claim \ref{thm2:claim2}, consider two vertices \( v_m, v_j \in V(G) \) such that \( v_j \in [v_m]_{v_c} \). 
We have  
\begin{align} \label{eq:thm2:eq1}
\left( \widetilde{L}^k (\boldsymbol{v}_{c}^1 + \boldsymbol{v}_{c}^2) \right)_{v_{m}^1}
& = d_{v_m} \left( \widetilde{L}^{k-1} (\boldsymbol{v}_{c}^1 + \boldsymbol{v}_{c}^2) \right)_{v_{m}^1}
- \sum\limits_{v_\ell \sim v_m} \left( \widetilde{L}^{k-1} (\boldsymbol{v}_{c}^1 + \boldsymbol{v}_{c}^2) \right)_{v_{\ell}^1}
 \nonumber \\
& \hspace{1cm} + \sum\limits_{v_{r} \in V(G)  \atop v_{r}^2 \sim v_{m}^1}  
\left[ \left( \widetilde{L}^{k-1} (\boldsymbol{v}_{c}^1 + \boldsymbol{v}_{c}^2) \right)_{v_{m}^1}  
- \left( \widetilde{L}^{k-1} (\boldsymbol{v}_{c}^1 + \boldsymbol{v}_{c}^2) \right)_{v_{r}^2} \right]
 \nonumber \\
& = d_{v_m} \left( \widetilde{L}^{k-1} (\boldsymbol{v}_{c}^1 + \boldsymbol{v}_{c}^2) \right)_{v_{m}^1}  
- \sum\limits_{v_\ell \sim v_m} \left( \widetilde{L}^{k-1} (\boldsymbol{v}_{c}^1 + \boldsymbol{v}_{c}^2) \right)_{v_{\ell}^1}    
\end{align}
where similar to claim \ref{thm2:claim1}, we used the induction hypothesis for claims \ref{thm2:claim1} and \ref{thm2:claim2} along with the fact that if $v_{r}^2 \sim v_{m}^1$ then $v_r \in [v_m]_{v_c}$. Now, since \( v_j \) and \( v_m \) are in the same  
automorphism orbit, their neighbors are also  
in the same automorphism orbits. 
Hence, by the induction hypothesis for claim \ref{thm2:claim2},  (\ref{eq:thm2:eq1}) becomes
\begin{align*}
\left( \widetilde{L}^k (\boldsymbol{v}_{c}^1 + \boldsymbol{v}_{c}^2) \right)_{v_{m}^1}
&= d_{v_m} \left( \widetilde{L}^{k-1} (\boldsymbol{v}_{c}^1 + \boldsymbol{v}_{c}^2) \right)_{v_{m}^1}  
- \sum\limits_{v_\ell \sim v_m} \left( \widetilde{L}^{k-1} (\boldsymbol{v}_{c}^1 + \boldsymbol{v}_{c}^2) \right)_{v_{\ell}^1}
\\
&= d_{v_j} \left( \widetilde{L}^{k-1} (\boldsymbol{v}_{c}^1 + \boldsymbol{v}_{c}^2) \right)_{v_{j}^1}  
- \sum\limits_{v_s \sim v_j} \left( \widetilde{L}^{k-1} (\boldsymbol{v}_{c}^1 + \boldsymbol{v}_{c}^2) \right)_{v_{s}^1}
\\
&= d_{v_j} \left( \widetilde{L}^{k-1} (\boldsymbol{v}_{c}^1 + \boldsymbol{v}_{c}^2) \right)_{v_{j}^1}  
- \sum\limits_{v_s \sim v_j} \left( \widetilde{L}^{k-1} (\boldsymbol{v}_{c}^1 + \boldsymbol{v}_{c}^2) \right)_{v_{s}^1}
\\
& \hspace{2cm} + \sum\limits_{v_{r} \in V(G)  \atop v_{r}^2 \sim v_{j}^1}  
\left[ \left( \widetilde{L}^{k-1} (\boldsymbol{v}_{c}^1 + \boldsymbol{v}_{c}^2) \right)_{v_{j}^1}  
- \left( \widetilde{L}^{k-1} (\boldsymbol{v}_{c}^1 + \boldsymbol{v}_{c}^2) \right)_{v_{r}^2} \right] 
\\
&= \left( \widetilde{L}^k (\boldsymbol{v}_{c}^1 + \boldsymbol{v}_{c}^2) \right)_{v_{j}^1}
\end{align*}
where in the second equality we used the fact that  
\( d_{v_m} = d_{v_j} \) and in the  
third equality we used the induction hypothesis for claims \ref{thm2:claim1} and \ref{thm2:claim2} along with the fact that if $v_{r}^2 \sim v_{j}^1$ then $v_r \in [v_j]_{v_c}$, similar to before. Hence, we have proved claim \ref{thm2:claim2} and so the induction argument is completed. Now, we have shown in particular that $\left( \widetilde{L}^k (\boldsymbol{v}_{c}^1 + \boldsymbol{v}_{c}^2) \right)_{v_{c}^1}  
= \left( \widetilde{L}^k (\boldsymbol{v}_{c}^1 + \boldsymbol{v}_{c}^2) \right)_{v_{c}^2}$,  \(\forall k \in \mathbb{N} \). Equivalently, 
\[
\left( \widetilde{L}^k (\boldsymbol{v}_{c}^1 + \boldsymbol{v}_{c}^2) \right)^T (\boldsymbol{v}_{c}^1 - \boldsymbol{v}_{c}^2) = 0 \quad \forall  k \in \mathbb{N}.
\]
Hence, \(\langle \boldsymbol{v}_{c}^1 + \boldsymbol{v}_{c}^2 \rangle_{\widetilde{L}} \perp \langle \boldsymbol{v}_{c}^1 - \boldsymbol{v}_{c}^2 \rangle_{\widetilde{L}}\), and so \( v_{c}^1 \) and \( v_{c}^2 \) are $L$-cospectral in \(\widetilde{G}\).
\end{proof}

\begin{proof}[Proof of Lemma \ref{lem:eigenvector_same_on_orbit}]
Since $v_\ell$ and $v_m$ are in the same orbit $[v_\ell]_{v_i}$, 
\begin{align}\label{eq:lem1:eq1}
(A^k \boldsymbol{v}_{i})_{v_\ell} = (A^k \boldsymbol{v}_{i})_{v_m} \quad \forall k \in \mathbb{N}.    
\end{align}
Now, $(A^k \boldsymbol{v}_{i})_{v_\ell}$ and $(A^k \boldsymbol{v}_{i})_{v_m}$ can be written as 
\[
(A^k \boldsymbol{v}_{i})_{v_\ell} = \sum_{\lambda \in \Lambda_A} \lambda^k c_\lambda (\boldsymbol{w}_\lambda)_{v_\ell} \quad \text{and} \quad (A^k \boldsymbol{v}_{i})_{v_m} = \sum_{\lambda \in \Lambda_A} \lambda^k c_\lambda (\boldsymbol{w}_\lambda)_{v_m}.
\]
Therefore, (\ref{eq:lem1:eq1}) is equivalent to
\[
\sum_{\lambda \in \Lambda_A} \lambda^k c_\lambda (\boldsymbol{w}_\lambda)_{v_\ell} = \sum_{\lambda \in \Lambda_A} \lambda^k c_\lambda (\boldsymbol{w}_\lambda)_{v_m} \quad \forall k \in \mathbb{N},
\]
which implies that, for all $ \lambda \in \Lambda_A $ such that $ c_\lambda \neq 0$, 
\[
(\boldsymbol{w}_\lambda)_{v_\ell} = (\boldsymbol{w}_\lambda)_{v_m}. \qedhere
\]
\end{proof}

\begin{proof}[Proof of Lemma \ref{lem:v1-v2_contains_only_induced_eigenvectors}]
Let $\boldsymbol{v}_{c} = \sum_{\lambda \in \Lambda_A} c_\lambda \, \boldsymbol{w}_\lambda$ be the spectral decomposition of $\boldsymbol{v}_{c}$ with respect to $A(G)$ and let $\widetilde{\boldsymbol{w}}_{\lambda}$ denote eigenvectors of $\widetilde{A}$ induced by $G$. Consider the vector $\sum_{\lambda \in \Lambda_A} c_\lambda \, \widetilde{\boldsymbol{w}}_\lambda$. We will show that $\boldsymbol{v}_{c}^1 - \boldsymbol{v}_{c}^2 = \sum_{\lambda \in \Lambda_A} c_\lambda \, \widetilde{\boldsymbol{w}}_\lambda$. 
     
Let $v'_\alpha\in V(H)$. Then $(\sum_{\lambda \in \Lambda_A} c_\lambda \, \widetilde{\boldsymbol{w}}_\lambda)_{v'_\alpha} = \sum_{\lambda \in \Lambda_A} c_\lambda \, (\widetilde{\boldsymbol{w}}_\lambda)_{v'_\alpha} = 0$ since $(\widetilde{\boldsymbol{w}}_{\lambda})_{v'_\alpha} = 0$ for all eigenvectors of $\widetilde{A}$ induced by $G$. 

Let $v_k^1 \in V(G^1)$. Then 
\begin{align*}
\big(\sum_{\lambda \in \Lambda_A} c_\lambda \, \widetilde{\boldsymbol{w}}_\lambda\big)_{v_k^1} &= \sum_{\lambda \in \Lambda_A} c_\lambda \, (\widetilde{\boldsymbol{w}}_\lambda)_{v_k^1} = \sum_{\lambda \in \Lambda_A} c_\lambda \, (\boldsymbol{w}_\lambda)_{v_k} 
\\
&= \big(\sum_{\lambda \in \Lambda_A} c_\lambda \, \boldsymbol{w}_\lambda\big)_{v_k} = (\boldsymbol{v}_{c})_{v_k} = \begin{cases}
    0 \quad \text{ if } v_k \neq v_c\\
    1 \quad \text{ if } v_k = v_c
\end{cases}
\end{align*}
Similarly, we have 
\[
\big(\sum_{\lambda \in \Lambda_A} c_\lambda \, \widetilde{\boldsymbol{w}}_\lambda\big)_{v_k^2} = \begin{cases}
  0  & \text{if } v_k \neq v_c \\
 -1  & \text{if } v_k = v_c. 
\end{cases}
\]
Hence, $ \sum_{\lambda \in \Lambda_A} c_\lambda \, \widetilde{\boldsymbol{w}}_\lambda = \boldsymbol{v}_{c}^1 - \boldsymbol{v}_{c}^2$. By Theorem \ref{thm:eigenvalue_of_subgraph_also_of_graph}, if $c_{\lambda} \neq 0$ in the spectral decomposition of $\boldsymbol{v}_{c}$, then $\lambda \in \Lambda_A$ and $\widetilde{\boldsymbol{w}}_{\lambda}$  are eigenvalues and eigenvectors of $\widetilde{A}$. Because the spectral decomposition of a vector is unique, this implies that $\sum_{\lambda \in \Lambda_A} c_\lambda \, \widetilde{\boldsymbol{w}}_\lambda$ is the spectral decomposition of $\boldsymbol{v}_{c}^1 - \boldsymbol{v}_{c}^2$ with respect to $\widetilde{A}$.
\end{proof}

\begin{proof}[Proof of Lemma \ref{lem:reduce_eigval_multiplicity}]
Let $n = |V(G)|$. We may suppose that the vertices of the graph $\widehat{G}$ are indexed in the same order as those of $G$ with the exception that the extra vertex $\hat{v}$ is the final $(n+1)$-th vertex. Hence, $A(\widehat{G})$ can be assumed to have the form  
\[
\widehat{A}=\left[
\begin{array}{ccccc}
&  &          &    &               0 \\
&  &          &    &         \vdots       \\
    &      &    A   &     &       1     \\
       &        &    &      &    \vdots   \\
     0  &      \cdots  &  1  &    \cdots  &    0           
\end{array}
\right]
\]
where the non-zero entries in the final row and column are in the $(\hat{v},v_m)$ and $(v_m,\hat{v})$ entries.

We will follow the proof of Theorem 9.5.1 of \cite{godsil_algebraic_graph_theory} with slight modifications. Let $\lambda_i(A)$ and $\lambda_i(\widehat{A})$ denote the $i$-th largest eigenvalue of $A$ and $\widehat{A}$ respectively, counting multiplicities.
Now suppose that the eigenvalues $\lambda_i(A)$ of $A$ corresponding to $\lambda$ are from $i = j$ to $i = j + \ell - 1$. Since $A$ is obtained from $\widehat{A}$ by deleting a vertex (i.e., a row and a column corresponding to $\hat{v}$), 
interlacing results (e.g. \cite[Theorem 9.5.1]{godsil_algebraic_graph_theory}) imply that the eigenvalues of $A$ interlace those of $\widehat{A}$, i.e.
\[
\lambda_i(\widehat{A}) \geq \lambda_i(A) \geq \lambda_{i+1}(\widehat{A}) \quad \text{for } i = 1, \ldots, n. 
\]
Now, let $\boldsymbol{w}_1, \ldots, \boldsymbol{w}_n$ be an orthonormal set of eigenvectors of $A$ with eigenvalues $\lambda_1(A), \lambda_2(A),\dots, \lambda_n(A)$ such that $\boldsymbol{w}_j = \boldsymbol{y}_\lambda$ and let $\boldsymbol{u}_1, \ldots, \boldsymbol{u}_{n+1}$ be an orthonormal set of eigenvectors of $\widehat{A}$ with eigenvalues $\lambda_1(\widehat{A}), \lambda_2(\widehat{A}),\dots, \lambda_{n+1}(\widehat{A})$. Let $W_i$ and $U_i$ denote the subspaces spanned by $\boldsymbol{w}_1, \ldots, \boldsymbol{w}_i$ and $\boldsymbol{u}_1, \ldots, \boldsymbol{u}_i$ respectively. Finally, let  
\[
R = \begin{bmatrix}
I_{n \times n} \\
0_{1 \times n}
\end{bmatrix}
\]
be the matrix that deletes from $\widehat{A}$ the row and column corresponding to the vertex $\hat{v}$ via  
\[
R^T \widehat{A} R = A.
\]
Now, for any $i = 1, \dots,n$ we have that $W_i$ is $i$-dimensional and $R^T U_{i-1}$ is at most $(i-1)$-dimensional. Therefore, taking $i = j$, there exists a vector $\boldsymbol{y} \in W_j \cap (R^T U_{j-1})^{\perp}$. This vector satisfies  
\[
\boldsymbol{y}^T R^T \boldsymbol{u}_k = 0 \quad \text{for } k = 1, \ldots, j-1
\]
and so $R \boldsymbol{y} \in U_{j-1}^\perp$. By Rayleigh's inequalities this implies
\[
\lambda_j(\widehat{A}) \geq \frac{(R \boldsymbol{y})^T \widehat{A} (R \boldsymbol{y})}{(R \boldsymbol{y})^T R \boldsymbol{y}} = \frac{\boldsymbol{y}^T A \boldsymbol{y}}{\boldsymbol{y}^T \boldsymbol{y}} \geq \lambda_j(A) = \lambda
\]
with equality if and only if $\boldsymbol{y}$ and $R \boldsymbol{y}$ are eigenvectors of $A$ and $\widehat{A}$, respectively, with eigenvalue $\lambda$. Now if $\boldsymbol{y}$ and $R \boldsymbol{y}$ are eigenvectors with eigenvalue $\lambda$, then necessarily $\boldsymbol{y} = \boldsymbol{y}_\lambda$ because there is only one linearly independent eigenvector with eigenvalue $\lambda$ in $W_j$. But this means $R \boldsymbol{y}_\lambda$ is an eigenvector of $\widehat{A}$ with eigenvalue $\lambda$. However,
\[
(\widehat{A} R \boldsymbol{y}_\lambda)_{\hat{v}} = (R \boldsymbol{y}_\lambda)_{v_m} = ( \boldsymbol{y}_\lambda)_{v_m} \neq 0 \quad \text{and} \quad (R \boldsymbol{y}_\lambda)_{\hat{v}} = 0
\]
where the equality on the left follows from the fact that $\hat{v}$ has only one neighbour, $v_m$, and that $(\boldsymbol{y}_\lambda)_{v_m} \neq 0$ by assumption, and the equality on the right follows from the definition of $R$. Thus,
\[
0 \neq (\widehat{A} R \boldsymbol{y}_\lambda)_{\hat{v}} = \lambda (R \boldsymbol{y}_\lambda)_{\hat{v}} = \lambda \cdot 0 = 0
\]
which is a contradiction. So the Rayleigh inequalities that we obtained are strict and therefore we must have $\lambda_j(\widehat{A}) > \lambda_j(A)$. Using the same argument on $-\widehat{A}$ and $-A$, we get  
\[
 -\lambda_{j+\ell}(\widehat{A}) = \lambda_{n + 2 -j-\ell}(-\widehat{A}) > \lambda_{n + 2 -j-\ell}(-A) = - \lambda_{j+\ell - 1}(A).
\]
and so
\[
\lambda_{j+\ell - 1}(A) > \lambda_{j+\ell}(\widehat{A}).
\]
Since  
\[
\lambda_j(A) = \lambda_{j+1}(A) = \cdots = \lambda_{j+\ell - 1}(A),
\]
the interlacing implies that only the eigenvalues $\lambda_{j+1}(\widehat{A}), \ldots, \lambda_{j+\ell - 1}(\widehat{A})$ of $\widehat{A}$ are equal to $\lambda$. Thus $\lambda$ is an eigenvalue of multiplicity $\mu_{\widehat{A}}(\lambda) = \ell - 1$ of $\widehat{A}$.
\end{proof}

\end{document}